\DeclareMathOperator{\Me}{Me}
\newcommand{\ITEM}[1]{\noindent{#1}}
\providecommand{\tamano}[2]{\fontsize{#1}{28}\selectfont{#2}}
\providecommand{\lcorte}[2]{l^{\ast}_{#1}(\alpha_{#2})}
\providecommand{\rcorte}[2]{r^{\ast}_{#1}(\alpha_{#2})}
\providecommand{\crispy}[1]{\widetilde{#1}}
\providecommand{\corteai}[2]{\nicefrac{\large{#1}}{\alpha_{#2}}}
\providecommand{\cortea}[1]{\nicefrac{\Large{#1}}{\alpha}}
\providecommand{\operador}[3]{#1:#2\longrightarrow #3}
\providecommand{\con}[1]{\mathds{#1}}
\providecommand{\talque}[0]{\; : \;}
\providecommand{\Sup}[1]{\underset{#1}{\sup~}}
\providecommand{\NFR}[0]{\mathcal{F}(\mathds{R})}
\providecommand{\NFRN}[1]{\mathcal{F}_{-}(\mathds{R})}
\providecommand{\IR}[0]{\con{I}\con{R}}
\newcommand{\prt}[1]{\langle #1\rangle}
\newtheorem{teorema}{Theorem}[section]
\newtheorem{proposicion}{Proposition}[section]
\newtheorem{corolario}{Corollary}[section]
\newtheorem{definicion}{Definition}[section]
\newtheorem{observacion}{Remark}[section]
\newtheorem{ejemplo}{Example}[section]
\begin{document}

\title{Admissible orders on fuzzy numbers} 

\author{\IEEEauthorblockN{Nicol\'as Zumelzu
\IEEEauthorrefmark{2}, Benjam\'in Bedregal\IEEEauthorrefmark{3}, \IEEEmembership{Member,~IEEE,}
		Edmundo Mansilla		
		\IEEEauthorrefmark{2}, Humberto Bustince\IEEEauthorrefmark{4}, \IEEEmembership{Senior Member,~IEEE,}} and Roberto D\'iaz\IEEEauthorrefmark{5}
\thanks{
	\IEEEauthorrefmark{2}Departamento de Matem\'atica y F\'isica, Facultad de Ciencias, Universidad de Magallanes, Punta Arenas, Chile.
	\IEEEauthorrefmark{3}Departamento de Inform\'atica e Matem\'atica Aplicada - DIMAp, Universidade Federal do Rio Grande do Norte - UFRN, Natal - RN, 59.072-970, Brazil,
	\IEEEauthorrefmark{4}Institute of Smart Cities, Departamento  de Matematica, Estadistica e Informatica, Universidad Publica de Navarra - UPNA, Campus Arrosadia sn, Pamplona, Navarra, Pamplona, Navarra, 31006, Spain,
	\IEEEauthorrefmark{5}Departamento de Ciencias Exactas, Universidad de los Lagos, Osorno, Chile,
	 E-mail addresses: nicolas.zumelzu@umag.cl, bedregal@dimap.ufrn.br, edmundo.mansilla@umag.cl, bustince@unavarra.es, roberto.diaz@ulagos.cl}}

\markboth{Journal of \LaTeX\ Class Files,~Vol.~14, No.~8, August~2015}%
{Shell \MakeLowercase{\textit{et al.}}: Bare Demo of IEEEtran.cls for IEEE Journals}

\maketitle

\begin{abstract}
From the more than two hundred partial orders for fuzzy numbers proposed in the literature, only a few are total. In this paper, we introduce the notion of admissible order for fuzzy numbers equipped with a partial order, i.e. a total order which refines the partial order. In particular, it is given special attention to the partial order proposed by Klir and Yuan in 1995. Moreover, we propose a method to construct admissible orders on fuzzy numbers in terms of linear orders defined for intervals considering a strictly increasing upper dense sequence, proving that this order is admissible for a given partial order. Finally, we use admissible orders to ranking the path costs in fuzzy weighted graphs.
\end{abstract}

\begin{IEEEkeywords}
Fuzzy numbers, orders on fuzzy numbers, admissible orders, fuzzy weighted graphs.
\end{IEEEkeywords}

\IEEEpeerreviewmaketitle

\section{Introduction}

\IEEEPARstart{F}{uzzy} numbers have been introduced by Zadeh \cite{zadeh1965fuzzy} to deal with imprecise numerical quantities in a practical way. 
The concept of a fuzzy number plays a fundamental role in formulating quantitative fuzzy variables, i.e. variables whose states are fuzzy numbers. 

 The study of admissible orders   over the set of closed subintervals of $[0,1]$, i.e. orders which refine the natural order for intervals, starts with the work of Bustince \textit{et al.} \cite{bustince2013generation} and from then several {pieces of research} on this topic have been made, for example in \cite{Santana2020,Zapata17}. {Lately}, this notion was adapted for other domains in \cite{Ivanosca2016,Annax20,Laura16a,Laura17,matzenauer2021strategies}. 
 
From the more than two hundred partial orders for fuzzy numbers proposed in the literature, only a few are total, for example \cite{TAsmus2017,valvis2009new,wang2014total}. Moreover, no study on admissible orders for fuzzy numbers or a subclass of them has been made so far. In order to overcome this lack and motivated mainly by the application potential of this subject, in this work we introduce and analyze the notion of admissible orders for fuzzy numbers with respect to a partial order and in particular, we explore the case where this partial order is the given in \cite{Klir1995}.

On the other hand, fuzzy weighted graphs are a generalization of the weighted graphs where fuzzy numbers are used to model the uncertainty in the weigths of the edges (c.f. \cite{Cornelis2004}). The fuzzy shortest path problem was first enuntiated in \cite{DuboisPrade1980} and since then several algorithms have been proposed to determine the fuzzy shortest path length in fuzzy weighted graphs (see for example \cite{Cornelis2004,Klein1991,LinLWu2021}). It is worth to note that the order considered on fuzzy numbers is fundamental to such algorithms. In \cite{Cornelis2004} it is formalized and proposed an algorithm to  determine  the fuzzy shortest path (routes) length on fuzzy weighted graphs. It pays special attention to the ranking methods of the routes, based in a defuzzification method. Nevetheless, the approach presents a problem with the center of gravity defuzzication method. In this paper, we present a solution to what was raised in \cite{Cornelis2004}. It does not consider defuzzification thanks to the given definition of admissible order. 

This paper is organized as follows: In Section 2, in addition to establishing the notation used, we recall some essential notions for the remaining sections. In Section 3 we see the most basic partial order on fuzzy numbers, and a total order proposal in \cite{wang2014total}. The notion of admissible order for fuzzy numbers is studied in Section 4. Section 5, is presents an application of admissible orders in the Shortest Path problem. Section 6, we present another application in graphs, this time for the travelling salesman problem considering the capitals of the Brazilian Northeast. Finally, Section 7 provides some final remarks.

\section{Preliminary Concepts}

In this section, we introduce notations, definitions and preliminary facts which are used throughout this work. 

Given a poset $\prt{P,\leq}$ and $a,b\in P$, we denote by $a\parallel b$ when $a$ and $b$ are incomparable, i.e. when neither $a\leq b$ nor $b\leq a$. We will denote the set of real numbers by $\con{R}$.

\begin{definicion}\label{uppercontinuidad}  Let $X\subseteq  \con{R}$. The function $\operador{f}{X}{\con{R}}$ is upper semi-continuous, if for every $a\in X$ and $\varepsilon>0$, there exists $\delta>0$ such that when, $|x-a|<\delta$ for a $x\in X$ then $f(x)<f(a)+\varepsilon$.
\end{definicion}

This definition of \textit{upper semi-continuous} is not the same, but  is equivalent to the definition  given in \cite{yeh2006real}, considering the usual topology for real numbers.

Based on \cite{Klir1995,Fodor2004} we consider the following definitions of left and right continuity of unary increasing/decreasing real functions.

\begin{definicion}
Let $a\in \con{R}$, $f:{]-\infty,a[}\rightarrow \con{R}$ be  a decreasing  function, $g:{]a,+\infty[}\rightarrow \con{R}$ be an increasing function { and $\con{Z}^+$ be the set of positive integers}. Then $f$ is left-continuous if for each increasing sequence $(x_i)_{i\in \con{Z}^+}$ of real numbers {$x_i \to a$, as $n\to \infty$} we have that $\lim\limits_{i\rightarrow \infty} f(x_i) =f(\lim\limits_{i\rightarrow \infty} x_i)$.  Dually, $g$ is right-continuous if for each decreasing sequence $(x_i)_{i\in \con{Z}^+}$ of real numbers {$x_i \to a$, as $i\to \infty$}  we have that $\lim\limits_{i\rightarrow \infty} g(x_i) =g(\lim\limits_{i\rightarrow \infty} x_i)$. 
\end{definicion}

\subsection{Admissible Orders on the Real Closed Interval Set }

Let  $\con{I}\con{R}$ be the set of all the closed intervals of real numbers, i.e.
$$\con{IR}=\{[a,b] \talque a,b\in\con{R},~a\leq b\}.$$ 
Closed intervals of real numbers will be called just of intervals.
Degenerate intervals, that is, intervals $[a,a]$ will be written {in the} simplified form $[a]$.
Given an interval $A$, its lower bound is denoted by  $\underline{A}${, and} its upper bound is denoted by $\overline{A}$, i.e. $\underline{[a,b]}=a$ and $\overline{[a,b]}=b$ for every $[a,b]\in \con{IR}$.

Since intervals are {sets, the} inclusion determines an order. Observe that the inclusion order for intervals can be determined exclusively on their extremes as follows
$$[a,b]\subseteq  [c,d] \Leftrightarrow c\leq a\wedge b\leq d.$$
Auxiliarly, we also define the following  strict order on $\con{IR}$:
$$[a,b]\Subset [c,d] \Leftrightarrow c< a\wedge b< d.$$
 Since, $[3,4]\subseteq [3,5]$ but $[3,4]\not\Subset [3,5]$ then $\subseteq\neq \Subset$.

In \cite{miranker1981computer} consider the following order for $\con{IR}$:
$$[a,b]\leq_{KM} [c,d]\Leftrightarrow a\leq c\wedge b\leq d.$$
This order is not linear and{, in some} situations a linear order is fundamental (see for example \cite{deng2012fuzzy}). Of course, there are infinitely many linear orders on $\con{IR}$. This motived \cite{bustince2013generation}, in the context of interval-valued fuzzy sets, i.e. in $L([0,1])=\{[a,b]\in  \con{IR}: 0\leq a\leq b\leq 1\}$,   to introduce the notion of admissible linear orders. For Bustince, in \cite{bustince2013generation}, an order only is admissible if it refines the usual  order on $L([0,1])$. But, it clear that this notion can be {adapted} in a straightforward way for $\con{IR}$: 

\begin{definicion}\label{def-ordenadmisible1}
A relation $\preceq$ on $\con{IR}$ is called an admisible order, if 
\begin{enumerate}[labelindent=\parindent, leftmargin=*,label=]
\item[(i)] $\preceq$ is a linear order on $\con{I}\con{R}$;
\item[(ii)] for all $A$, $B$ on $\con{IR}$, $A\preceq B$ whenever $A\leq_{KM} B$.
\end{enumerate}
\end{definicion}

\begin{ejemplo}\label{Exemple2.1}
Admissible orders on $\con{IR}$:
 \begin{enumerate}
  \item The Lexical 1: $[a,b]\preceq_{Lex1} [c,d]\Leftrightarrow a< c\vee (a=c\wedge b\leq d)$;
  
  \item The Lexical 2: $[a,b]\preceq_{Lex2} [c,d]\Leftrightarrow b< d\vee (b=d\wedge a\leq c)$;
  
  \item Xu-Yager (adapted from \cite{xu2006some}): $$[a,b]\preceq_{XY} [c,d]\Leftrightarrow a+b< c+d\vee (a+b=c+d\wedge b-a\leq d-c);$$
  
  \item Twice Xu-Yager (adapted from \cite[Ex. 4]{Santana2020}): 
  \begin{multline*}
[a,b]\preceq_{2XY} [c,d]\Leftrightarrow a+3b< c+ 3d\\\vee (a+3b=c+3d\wedge b-a\leq d-c).
 \end{multline*}
 \end{enumerate}
\end{ejemplo}

\subsection{Fuzzy sets}

The following definitions can be found in \cite{Klir1995,Bector2005} and in most of the introductory books on fuzzy sets theory. In all this section $X$ will be a non-empty reference set with generic elements denoted by $x$.

\begin{definicion}\label{definicionsoporte}  A fuzzy set $A$ on $X$ is a function $\operador{A}{X}{[0, 1]}$.
In addition, 
 \begin{enumerate}
 \item[(i)] The support of $A$,  is the set $supp(A)=\{x\in X\talque A (x) > 0\}$;
  \item[(ii)] The kernel of $A$,  is the  set $\ker(A)=\{x\in X\talque A (x) =1\}$;
  \item[(iii)] Given  $\alpha\in~]0,1]$, the $\alpha$-cuts set of  $A$ is the set $\cortea{A} =\{x\in X \talque A(x)\geq \alpha\}$;
  \item[(iv)] The height of $A$ is  $h(A) = \Sup{x\in X} A (x)$.
 \end{enumerate}
\end{definicion}

If $h(A) = 1$, then the fuzzy set $A$ is called of normal fuzzy set, otherwise, i.e. if $h(A)<1$, it is called subnormal. Clearly, in a finite set $X$, we have, $A$ is normal if and only if  $\ker(A)\neq \emptyset$ and if $A$ is  subnormal and  $supp(A)\neq \emptyset$ then it can be normalized, by the new fuzzy set $A^*$ on $X$ where $A^\ast(x)=\frac{A(x)}{h(A)}$, for each $x\in X$.

\begin{definicion}
A fuzzy set $A$ on $\con{R}$ is said  to be a convex fuzzy set if its $\alpha$-cuts are (crisp) convex sets, i.e. for all $\alpha,t\in~]0, 1]$ and  $x,y\in \cortea{A}$, $tx+(1-t)y\in\cortea{A}$.
\end{definicion}

\begin{teorema}\cite[Theorem 1.1]{Klir1995}\label{teoconvexidade}
A fuzzy set $A$ on $\con{R}$ is  convex  iff for all $x_1$, $x_2\in\con{R}$ and $\lambda\in[0,1]$
$$A(\lambda x_1+(1-\lambda)x_2)\geq \min\left\{A(x_1),A(x_2)\right\},$$
where $\min$ denotes the minimum operator.
\end{teorema}

\subsection{Fuzzy numbers}

There are several different definitions of fuzzy numbers in the literature, for example \cite{valvis2009new,Klir1995,Bector2005,hanss2005applied,lee2004first,buckley2002introduction,gomes2015fuzzy} {Most} of them vary in the kind of continuity required for the membership function. For example, in \cite{valvis2009new,gomes2015fuzzy} is considered upper semi-continuity whereas in \cite{hanss2005applied,lee2004first} is required piecewise continuity and in \cite{Klir1995,Bector2005,buckley2002introduction} no continuity constraint is required. Another difference {can be that some require} that the kernel of the fuzzy number be a singleton{, another one is that it should be non-empty}. Here we adopted the given in \cite{Klir1995}.

\begin{definicion} \label{def-FN} A fuzzy set $A$ on $\con{R}$ is called a fuzzy number if it satisfies the following conditions
\begin{enumerate}[labelindent=\parindent, leftmargin=*,label=]
\item[(i)] $A$ is normal;
\item[(ii)] $\cortea{A}$ is a closed interval for every $\alpha\in ~]0, 1]$;
\item[(iii)] the support of $A$ is bounded.
\end{enumerate}
\end{definicion}
Finally, $\NFR$ will denote the set of all fuzzy numbers.

We note that Definition \ref{def-FN}  is equivalent to appear in \cite[p.44]{Bector2005}.

\begin{observacion}\label{sobreelsoporteconteoremadecaracterizaciondefuzzy}
Since the support  of a fuzzy number of $A$ is bounded, there exist  $\omega_1$, $\omega_2$ in $\con{R}$, s.t. $supp(A)=\{x\in\con{R} \talque \omega_1<x<\omega_2\}$. In  addition, we will use the notation $supp(A)^-$ and $supp(A)^+$ for such bounds, i.e. for $\omega_1$ and $\omega_2$, respectively. Analogously, since the kernel  of $A$ is convex then there exists a closed interval $[a,b]$, s.t. $\ker(A)=[a,b]$. {{Besides, we will use notation $\ker(A)^-$ and $\ker(A)^+$ for such bounds, i.e. for $a$ and $b$, respectively.} }
\end{observacion}
The next theorem gives a full characterization of fuzzy numbers.

\begin{teorema}\cite[Theorem  4.1]{Klir1995} \label{teoKlir19954-1}
Let $A$ be a fuzzy set on $\mathds{R}$. Then, $A\in \mathcal{F}(\con{R})$ if and only if there exist a closed interval $[a, b]\neq \emptyset$, a function $l$ from $[-\infty, a]$ to $[0, 1]$ which is right-continuous, increasing  and $l(x)=0$ for each $x\in~]-\infty,supp(A)^-]$, and  a function $r$ from $[b,+\infty]$ to $[0, 1]$ which is left-continuous, decreasing and $r(x)=0$ for each $x\in [supp(A)^+,+\infty[$, such that
\begin{equation}\label{eqteoKlir19954-1}
A(x)=\left\{\begin{array}{ll}
1, & \textrm{ if }x \in [a, b],\\
l(x), & \textrm{ if } x \in~ ]-\infty, a[,\\
r(x), & \textrm{ if }x \in~]b,+\infty[.\\
\end{array}\right.
\end{equation}
\end{teorema}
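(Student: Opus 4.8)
The plan is to prove both implications directly, using Remark~\ref{sobreelsoporteconteoremadecaracterizaciondefuzzy} (the kernel is a closed interval and the support is a bounded open interval) and the convexity criterion of Theorem~\ref{teoconvexidade}.

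\emph{Necessity.} Assume $A\in\mathcal{F}(\con{R})$. By normality $\ker(A)\neq\emptyset$, and since every $\alpha$-cut is a closed interval, $A$ is a convex fuzzy set, so $\ker(A)$ is a closed interval; put $[a,b]=\ker(A)$. As $S(A)$ is bounded, set $\omega_1=\inf S(A)$ and $\omega_2=\sup S(A)$, both finite, and define $l$ and $r$ as the restrictions of $A$ to $(-\infty,a)$ and $(b,+\infty)$ respectively; since $A$ takes values in $[0,1]$, so do $l$ and $r$. If $x<\omega_1$ then $x\notin S(A)$, hence $l(x)=0$; symmetrically $r(x)=0$ for $x>\omega_2$. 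For monotonicity of $l$ on $[\omega_1,a)$, take $\omega_1\le x_1<x_2<a$, write $x_2=\lambda x_1+(1-\lambda)a$ with $\lambda=(a-x_2)/(a-x_1)\in(0,1)$, and apply Theorem~\ref{teoconvexidade} together with $A(a)=1$ to get $l(x_2)=A(x_2)\ge\min\{A(x_1),1\}=l(x_1)$. For right-continuity of $l$, let $x_n\downarrow x$ in $(-\infty,a)$; by monotonicity $L=\lim_n l(x_n)$ exists and $L\ge l(x)$, and if $L>l(x)$ we may choose $\alpha$ with $l(x)<\alpha\le L$, so $x_n\in\cortea{A}$ for every $n$ and, $\cortea{A}$ being closed, $x\in\cortea{A}$, i.e.\ $A(x)\ge\alpha>l(x)=A(x)$, a contradiction. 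Applying the mirror argument to the right tail shows $r$ is decreasing on $(b,\omega_2]$ and left-continuous, and formula~\eqref{eqteoKlir19954-1} holds by the very definition of $l$, $r$ and $[a,b]$.

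\emph{Sufficiency.} Given $[a,b]$, $l$ and $r$ with the stated properties, define $A$ by~\eqref{eqteoKlir19954-1}; since $l,r$ take values in $[0,1]$, $A$ is a fuzzy set. It is normal because $A\equiv 1$ on the nonempty interval $[a,b]$, and $S(A)\subseteq[\omega_1,\omega_2]$ is bounded since $A$ vanishes outside $[\omega_1,\omega_2]$. It remains to check that $\cortea{A}$ is a closed interval for each $\alpha\in(0,1]$. Because $l$ is increasing on $[\omega_1,a)$ and null on $(-\infty,\omega_1)$, the set $\{x<a:l(x)\ge\alpha\}$ is an up-set of $(-\infty,a)$ contained in $[\omega_1,a)$, hence an interval with left endpoint $\ell_\alpha=\inf\{x<a:l(x)\ge\alpha\}$ (and empty when that infimum equals $a$); right-continuity of $l$ forces $l(\ell_\alpha)\ge\alpha$ when $\ell_\alpha<a$, so this set equals $[\ell_\alpha,a)$. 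Symmetrically, $\{x>b:r(x)\ge\alpha\}$ equals $(b,\rho_\alpha]$ with $\rho_\alpha=\sup\{x>b:r(x)\ge\alpha\}$ (or is empty), by left-continuity of $r$. Since $[a,b]\subseteq\cortea{A}$, gluing the three pieces yields $\cortea{A}=[\ell_\alpha,\rho_\alpha]$ (with $\ell_\alpha=a$ or $\rho_\alpha=b$ when the corresponding tail contributes nothing), a nonempty closed interval. Hence $A$ satisfies Definition~\ref{def-FN}, i.e.\ $A\in\mathcal{F}(\con{R})$.

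The step I expect to be the main obstacle is the bookkeeping in the sufficiency part: one has to verify that the two tails paste together with $[a,b]$ into a \emph{single} closed interval for \emph{every} $\alpha\in(0,1]$, carefully treating the degenerate cases $\omega_1=a$ or $\omega_2=b$ and the cases where $\ell_\alpha$ or $\rho_\alpha$ falls on an endpoint of the kernel; the one-sided continuity hypotheses on $l$ and $r$ are exactly what keeps the relevant boundary point inside the cut.
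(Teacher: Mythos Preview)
The paper does not supply its own proof of this theorem: it is stated with the citation \cite[Theorem~4.1]{Klir1995} and then used as a black box, so there is no in-paper argument to compare against. Your proof is a correct and standard derivation of the Klir--Yuan characterization, using exactly the ingredients the paper makes available (Remark~\ref{sobreelsoporteconteoremadecaracterizaciondefuzzy}, Theorem~\ref{teoconvexidade}, and the closedness of the $\alpha$-cuts in Definition~\ref{def-FN}); the right-continuity argument via ``if $L>l(x)$ then $x$ lies in the closed $\alpha$-cut for $\alpha\in(l(x),L]$'' is precisely the usual one, and your treatment of the sufficiency direction, including the degenerate cases, is adequate.
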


\begin{corolario}
For each interval $[a,b]\in \con{IR}$ their characteristic function 
$\widetilde{[a,b]}:\con{R}\rightarrow [0,1]$ defined by

\[\widetilde{[a,b]}(x)=\left\{\begin{array}{ll}
                               1, & \mbox{ if }x\in [a,b], \\
                               0, & \mbox{ if }x\not\in [a,b]
                              \end{array}
                              \right. \]
is a fuzzy number. 
\end{corolario}

So, in some sense, we can think that fuzzy numbers generalize the set of closed intervals of real numbers, i.e. that $\con{IR}\subseteq  \mathcal{F}(\con{R})$ and therefore  $\con{R}\subseteq  \mathcal{F}(\con{R})$ too, once degenerated intervals can be seen as real numbers and instead of {writing} $\widetilde{[a,a]}$ we just use $\widetilde{a}$. 
A fuzzy number  $\widetilde{a}$ is called of crisp number or fuzzy singleton in \cite{hanss2005applied}. A fuzzy number $A$  is called a triangular fuzzy number whenever $ker(A)=[a,a]$, $l(x)=\frac{x-supp(A)^-}{a-supp(A)^-}$ and $r(x)=\frac{supp(A)^+ - x}{supp(A)^+ -a}$, and is denoted by the triple $(supp(A)^-,a,supp(A)^+)$.

\begin{observacion} \label{rem-uppercontinuidad}
 From \cite[Remark 3.3.2.]{Bector2005} we have that each fuzzy number $A$ is an upper semi-continuous function and therefore, the definition given  \cite{gomes2015fuzzy,valvis2009new} is equivalent to the Definition \ref{def-FN}.
\end{observacion}

In the proof of Theorem \ref{teoKlir19954-1}, Klir and Yuan, provide a characterization of the $\alpha$-cuts of fuzzy numbers based on  the functions $l^\ast: [0,1]\rightarrow ~]-\infty,a[$ and $r^\ast:[0,1]\rightarrow~ ]b,\infty[$  defined by 
\begin{equation*}\label{eq-last-rast} 
 l^\ast(\alpha)=\inf\left\{x\in~ ]-\infty,a[~:l(x)\geq \alpha\right\}
\end{equation*}
and  
\begin{equation*}\label{eq-last-rast} 
r^\ast(\alpha)=\sup\left\{x\in~]b,\infty[~:r(x)\geq \alpha\right\}
\end{equation*} 
with the convention that {if $\left\{x\in~ ]-\infty,a[~:l(x)\geq \alpha\right\}=\left\{x\in~]b,\infty[~:r(x)\geq \alpha\right\}=\emptyset$ then $l^\ast(\alpha)=a$ and  $r^\ast(\alpha) = b$,} and
given by
\begin{equation}\label{parametrizacionalphacorte}
 \cortea{A}= \begin{cases}\begin{array}{ll}
  [l^{\ast}(\alpha),r^{\ast}(\alpha)],& \textrm{ if }\alpha\in [0,1[,\\
 [a,b],&\textrm{if }\alpha=1.\end{array}\end{cases}
 \end{equation}

\begin{ejemplo} Consider the fuzzy number $A$ (see Figure \ref{ejemplo-alfa-cortes}) given by:
$$A(x)=\begin{cases} \begin{array}{ll}
1,& \mbox{ if }x\in[3,4],\\ l(x),& \mbox{ if }x\in~ ]-\infty,3[,\\ r(x), & \mbox{ if }x\in~]4,+\infty[,
\end{array}\end{cases}$$
where $l$ and $r$ are:
$$l(x)=\begin{cases}\begin{array}{ll}\frac{x+1}{4}, &\mbox{ if }2\leq x<3,\\
\frac{1}{2}, & \mbox{ if }1\leq x< 2,\\ 0, &\mbox{ if } x<1,
\end{array}\end{cases}$$
and
$$r(x)=\begin{cases} \begin{array}{ll}
 \frac{20-3x}{8}, & \mbox{ if }4<x\leq 5\\ \frac{6-x}{3}, & \mbox{ if }5<x\leq 6,\\ 0, & \mbox{ if }x>6,\end{array}
\end{cases}$$
\begin{figure}[h]
\centering
\includegraphics[width=0.4\textwidth]{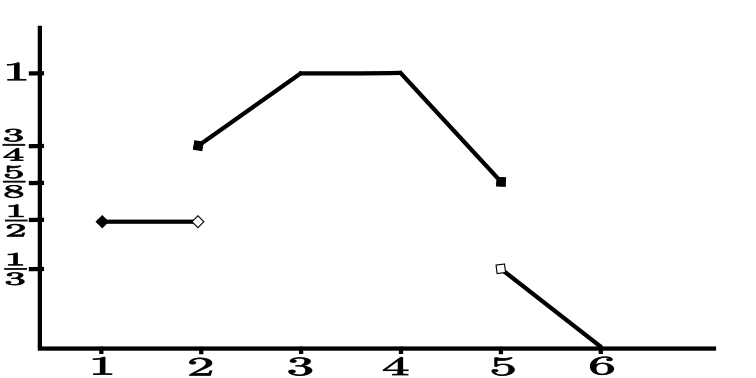}
\caption{Fuzzy Number $A$.}
\label{ejemplo-alfa-cortes}
\end{figure}
verify the conditions of the Theorem \ref{teoKlir19954-1}. Let's calculate the $\alpha$-cuts of $A$, starting with $l$, i.e.:

\begin{equation}\label{lasteristo}
l^*(\alpha)=\left\{\begin{array}{ll}
3, &\textrm{ if } \alpha=1,\\
1-4\alpha, &\textrm{ if } \alpha\in\left[\frac{3}{4},1\right[,\\
2, &\textrm{ if } \alpha\in\left]\frac{1}{2},\frac{3}{4}\right[,\\
1, &\textrm{ if } \alpha\in \left[0,\frac{1}{2}\right].
\end{array}\right.
\end{equation} 
To continue, with $r$ it's analog, we have
\begin{equation}\label{rasteristo}
r^*(\alpha)=\left\{\begin{array}{ll}
4,&\textrm{ if } \alpha=1,\\
\frac{20-8\alpha}{3},&\textrm{ if } \alpha\in\left[\frac{5}{8},1\right[,\\
5,&\textrm{ if } \alpha\in\left[\frac{1}{3},\frac{5}{8}\right],\\
6-3\alpha, &\textrm{ if } \alpha\in\left[0,\frac{1}{3}\right[.
\end{array}\right.
\end{equation} 
Therefore, from Eq. \eqref{parametrizacionalphacorte}, we express the $\alpha$-cuts of $A$ given by:
\begin{align*}
\cortea{A}&=\left\{\begin{array}{ll}
[l^\ast(\alpha),r^\ast(\alpha)], & \textrm{ if }\alpha\in [0,1[,\\
\left[a,b\right], & \textrm{ if }\alpha=1
    \end{array}\right.\\
 &=\left\{\begin{array}{ll}
\left[4\alpha-1,\frac{20-8\alpha}{3}\right], & \textrm{ if }\alpha\in\left[\frac{3}{4},1\right[,\\
\left[2,\frac{20-8\alpha}{3}\right], & \textrm{ if }\alpha\in\left[\frac{5}{8},\frac{3}{4}\right[,\\
\left[2,5\right], & \textrm{ if }\alpha\in\left]\frac{1}{2},\frac{5}{8}\right[,\\
\left[1,5\right], & \textrm{ if }\alpha\in\left[\frac{1}{3},\frac{1}{2}\right],\\
\left[1,6-3\alpha\right], & \textrm{ if }\alpha\in\left[0,\frac{1}{3}\right[,\\
\left[3,4\right], & \textrm{ if }\alpha=1.\\
\end{array}\right.
\end{align*}
\end{ejemplo}
 
            \begin{proposicion}\cite[p. 109-110]{Klir1995} Let $A,B\in \mathcal{F}(\con{R})$ then the fuzzy sets $A\wedge B$ and $A\vee B$ defined by
$$A\wedge B(x)=\Sup{x=\min\{y,z\}} \min\{A(y),B(z)\}$$
and
$$A\vee B(x)=\Sup{x=\max\{y,z\}} \min\{A(y),B(z)\},$$
for each $x\in \con{R}$ is a fuzzy number. In addition, $\langle\mathcal{F}(\con{R}),\wedge,\vee\rangle$ is a distributive lattice.
\end{proposicion}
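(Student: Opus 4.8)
\emph{Proof plan.} Throughout, write $A_\alpha$ for the $\alpha$-cut $\cortea{A}$ and $A_\alpha=[\underline{A}_\alpha,\overline{A}_\alpha]$, which is legitimate by Definition \ref{def-FN}(ii). The whole argument proceeds through $\alpha$-cuts, reducing each claim to an elementary property of $\min$ and $\max$ on $\con{R}$ applied to the endpoints.

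The crucial step is to show that, for every $\alpha\in(0,1]$,
$$(A\wedge B)_\alpha=\{\min(y,z):y\in A_\alpha,\ z\in B_\alpha\}\qquad\text{and}\qquad(A\vee B)_\alpha=\{\max(y,z):y\in A_\alpha,\ z\in B_\alpha\}.$$
The inclusion ``$\supseteq$'' is immediate from the definitions: if $A(y)\ge\alpha$ and $B(z)\ge\alpha$, then $(A\wedge B)(\min(y,z))\ge\min(A(y),B(z))\ge\alpha$, and symmetrically for $\vee$. The inclusion ``$\subseteq$'' is the only genuinely analytic point, because the supremum in the definition of $A\wedge B$ need not be attained; I would handle it by compactness. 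If $(A\wedge B)(x)\ge\alpha$, pick for each $n$ a pair $y_n,z_n$ with $\min(y_n,z_n)=x$ and $\min(A(y_n),B(z_n))\ge\alpha-\tfrac{1}{n}$. Since the supports of $A$ and $B$ are bounded, the sequences $(y_n)$, $(z_n)$ are bounded, so along a subsequence $y_n\to y$, $z_n\to z$, and continuity of $\min$ gives $\min(y,z)=x$. Because every $\alpha$-cut of a fuzzy number is closed (Definition \ref{def-FN}(ii)), $A$ and $B$ are upper semi-continuous in the sense of Definition \ref{uppercontinuidad}; as $A(y_n)\ge\alpha-\tfrac{1}{n}$ and $y_n\to y$, this forces $A(y)\ge\alpha$, and likewise $B(z)\ge\alpha$, so $x$ lies in the right-hand set. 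The case of $\vee$ is identical.

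Granting this, $(A\wedge B)_\alpha$ is the image of the compact connected rectangle $A_\alpha\times B_\alpha$ under the continuous map $(y,z)\mapsto\min(y,z)$, hence a bounded closed interval, and comparing endpoints yields $(A\wedge B)_\alpha=[\min(\underline{A}_\alpha,\underline{B}_\alpha),\min(\overline{A}_\alpha,\overline{B}_\alpha)]$ and $(A\vee B)_\alpha=[\max(\underline{A}_\alpha,\underline{B}_\alpha),\max(\overline{A}_\alpha,\overline{B}_\alpha)]$. Now I would check the three conditions of Definition \ref{def-FN} for $A\wedge B$ and $A\vee B$: normality follows by choosing $y\in\ker(A)$, $z\in\ker(B)$ (both non-empty, as $A,B$ are normal) and observing $(A\wedge B)(\min(y,z))=(A\vee B)(\max(y,z))=1$; condition (ii) is exactly the interval formula just obtained; and boundedness of the support holds since, from the definitions, $S(A\wedge B)\subseteq\{\min(y,z):y\in S(A),z\in S(B)\}$ and $S(A\vee B)\subseteq\{\max(y,z):y\in S(A),z\in S(B)\}$, both bounded because $S(A)$, $S(B)$ are. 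Hence $A\wedge B,A\vee B\in\NFR$. (Alternatively, one can feed the endpoint functions $\alpha\mapsto\min(\underline{A}_\alpha,\underline{B}_\alpha)$ and $\alpha\mapsto\min(\overline{A}_\alpha,\overline{B}_\alpha)$ into Proposition \ref{teolevel} and verify its conditions (i)--(v).)

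For the lattice statement, recall that a fuzzy number is recovered from its cuts by $A(x)=\sup\{\alpha\in(0,1]:x\in A_\alpha\}$, so the cut map $A\mapsto(A_\alpha)_{\alpha\in(0,1]}$ is injective; by the endpoint formula it sends $\wedge$ and $\vee$ to the componentwise interval operations $[a,b]\sqcap[c,d]=[\min(a,c),\min(b,d)]$ and $[a,b]\sqcup[c,d]=[\max(a,c),\max(b,d)]$. Since $\min$ and $\max$ preserve the order of the endpoints, $\langle\con{IR},\sqcap,\sqcup\rangle$ is a sublattice of the distributive lattice $\con{R}^{2}$, hence distributive, and so is the product $\prod_{\alpha\in(0,1]}\con{IR}$; together with the closure of $\NFR$ under $\wedge,\vee$ proved above, this exhibits $\langle\NFR,\wedge,\vee\rangle$ as (isomorphic to) a sublattice of a distributive lattice, which is the assertion. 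Equivalently one verifies commutativity, associativity, idempotency, the two absorption laws and distributivity directly: by injectivity of the cut map, each reduces to the corresponding identity for $\min$ and $\max$ on $\con{R}$, read off separately on the left and right endpoints. The only real obstacle is the ``$\subseteq$'' inclusion in the $\alpha$-cut formula; everything after it is endpoint bookkeeping.
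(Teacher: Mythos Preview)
The paper does not supply a proof of this proposition at all: it is quoted verbatim from \cite[p.~109--110]{Klir1995} and stated without argument, so there is no ``paper's own proof'' to compare against. Your proposal is therefore not competing with anything in the manuscript.

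On its own merits, your argument is sound. The key identity $(A\wedge B)_\alpha=\{\min(y,z):y\in A_\alpha,\ z\in B_\alpha\}$ (and its $\vee$ counterpart) is exactly the special case of Klir--Yuan's Theorem on the $\alpha$-cuts of $\sup$--$\min$ extended operations, and your compactness/upper-semi-continuity proof of the delicate ``$\subseteq$'' inclusion is the standard one. Two small points worth tightening if you write it out in full: (i) the boundedness of $(y_n)$ and $(z_n)$ only kicks in once $n>1/\alpha$, so that $\min(A(y_n),B(z_n))\ge\alpha-\tfrac1n>0$ forces $y_n\in S(A)$, $z_n\in S(B)$; you may as well start the sequence there. (ii) The claim that closedness of all $\alpha$-cuts is equivalent to upper semi-continuity in the sense of Definition~\ref{uppercontinuidad} is true and standard, but since the paper gives only the $\varepsilon$--$\delta$ form, a one-line remark linking the two would not hurt. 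The endpoint formulas, the verification of Definition~\ref{def-FN}, and the reduction of the lattice identities to the distributive lattice $\langle\con{R},\min,\max\rangle$ via the injective cut map are all correct and efficiently organised.
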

            
The {arithmetic operations} on fuzzy numbers are defined based on the Zadeh extension principle.
 Let $A$ and $B$ be two fuzzy numbers and $\star\in \{+,-,\cdot, \div\}$ 
  and the fuzzy set $A\star B$ defined for $z$ in  $\con{R}$    
  as $A\star B(z)=\sup\limits_{z=x\star y} \min\{A(x),B(y)\}$.   
  Then $A\star B$ is also a fuzzy number \cite[Theorem 4.2]{Klir1995}. An alternative form to define the arithmetic operations on $\mathcal{F}(\con{R})$ is based on the Klir and Yuan decompositional theorem  \cite[Theorem 2.5]{Klir1995} which {proves} that each fuzzy set can be recovered from its $\alpha$-cuts. Thereby, $A\star B$ is the fuzzy set whose $\alpha$-cuts are 
  \begin{equation}\label{eq-alpha-cuts-A*B}
   \cortea{A\star B}=\cortea{A}{\diamond}\cortea{B},
  \end{equation}
where $\diamond\in \{+,-,\cdot, \div\}$ is the respective arithmetic operation on $\IR$ for all $\alpha\in~]0,1]$.
  In addition, when $A$ and $B$ are triangular fuzzy numbers then $A+B$ and $A-B$ are also triangular fuzzy numbers, but $A\cdot B$ and $A\div B$ {can not be} triangular fuzzy numbers \cite[Section 3.5]{Bector2005}. Indeed, $A=(a_1,a_2,a_3)$ and $B=(b_1,b_2,b_3)$ then $A+B=(a_1+b_1,a_2+b_2,a_3+b_3)$ and $A-B=(a_1-b_1,a_2-b_2,a_3-b_3)$. Notice also that for each $r\in \con{R}$ and a triangular fuzzy number $A=(a_1,a_2,a_3)$ we have that {$\crispy{r}\cdot A=(a_1\cdot r, a_2 \cdot r, a_3\cdot r)$}.

\section{Order on fuzzy numbers}

The following partial order in $\mathcal{F}(\con{R})$  was {proposed} by Zadeh in \cite{zadeh1965fuzzy}.
\begin{definicion}\label{orderzadeh}
Let $A$ and $B$ be two fuzzy numbers.
$$A\leq_Z B\Longleftrightarrow A (x ) \leq B(x) \textrm{ for all } x\in\con{R}.$$
\end{definicion}
The Figure-\ref{f:incomparables1} {shows} a case where $A\leq_Z B$.

The Zadeh's order can be characterized in terms of the inclusion order on their $\alpha$-cuts.

\begin{proposicion}\cite[Theorem 2.3-(viii)]{Klir1995}
\label{equivalencia-zadeh-moore}
Let $A$ and $B\in \NFR$. Then $A\leq_Z B$ if and only if $\forall\alpha\in~]0,1]~~ \cortea{A}\subseteq \cortea{B}$.
\end{proposicion}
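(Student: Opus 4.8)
The plan is to prove both implications of the biconditional by passing back and forth between the characterization of $\leq_Z$ in terms of membership values and the description of $\alpha$-cuts of a fuzzy number given in Remark~\ref{sobreelsoporteconteoremadecaracterizaciondefuzzy} and Theorem~\ref{teoKlir19954-1}. The key elementary fact I would isolate first is the ``representation of a point by the $\alpha$-cuts that contain it'': for any fuzzy number $C$ and any $x\in\con{R}$, one has $C(x)=\sup\{\alpha\in(0,1]: x\in \cortea{C}\}$ (with the convention $\sup\emptyset=0$), and moreover, since each $\cortea{C}$ is a closed interval and the cuts are nested and left-continuous in $\alpha$ (item iv of Proposition~\ref{teolevel}), this supremum is actually attained whenever it is positive, i.e. $C(x)\geq\alpha \iff x\in\cortea{C}$. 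This equivalence is the bridge between the two formulations and I would state and justify it as a short preliminary observation.

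For the forward direction, assume $A\leq_Z B$, i.e. $A(x)\leq B(x)$ for all $x$. Fix $\alpha\in(0,1]$ and take $x\in\cortea{A}$. Then $\alpha\leq A(x)\leq B(x)$, so $x\in\cortea{B}$ by the bridging equivalence; hence $\cortea{A}\subseteq\cortea{B}$. This direction is immediate once the observation is in place.

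For the converse, assume $\cortea{A}\subseteq\cortea{B}$ for every $\alpha\in(0,1]$, and fix $x\in\con{R}$. If $A(x)=0$ there is nothing to prove, so suppose $A(x)=\beta>0$. By the bridging equivalence $x\in\overline{A_\beta}$ (the $\beta$-cut of $A$), hence $x\in\overline{B_\beta}$ by hypothesis, hence $B(x)\geq\beta=A(x)$. Since $x$ was arbitrary, $A\leq_Z B$.

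I do not expect a serious obstacle here; the only point requiring care is the bridging equivalence $C(x)\geq\alpha\iff x\in\cortea{C}$, in particular that the supremum defining $C(x)$ is attained. One direction ($x\in\cortea{C}\Rightarrow C(x)\geq\alpha$) is just the definition of the $\alpha$-cut; the other ($C(x)\geq\alpha\Rightarrow x\in\cortea{C}$) is again immediate from the definition $\cortea{C}=\{x: C(x)\geq\alpha\}$. So in fact the equivalence is definitional and the whole argument is short — the ``work'' is really just unwinding Definition~\ref{definicionsoporte}. One could alternatively phrase the converse directly: for each $x$, $A(x)=\sup\{\alpha: x\in\cortea{A}\}\leq\sup\{\alpha: x\in\cortea{B}\}=B(x)$, using monotonicity of $\sup$ over the inclusion $\{\alpha: x\in\cortea{A}\}\subseteq\{\alpha: x\in\cortea{B}\}$, which avoids even mentioning attainment. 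I would present this cleaner version as the main line and relegate the attainment remark to a parenthetical.
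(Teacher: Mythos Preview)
The paper does not supply its own proof for this proposition; it is stated as a direct citation of \cite[Theorem 2.3-(viii)]{Klir1995}. Your argument is correct and is the standard one: both directions follow immediately once one recalls that, by Definition~\ref{definicionsoporte}, $\cortea{C}=\{x\in\con{R}: C(x)\geq\alpha\}$, so the ``bridging equivalence'' $C(x)\geq\alpha\Leftrightarrow x\in\cortea{C}$ is tautological and no appeal to closedness of the cuts, left-continuity in $\alpha$, or attainment of the supremum is needed. You already acknowledge this in your final paragraph; the cleanest write-up is simply your one-line converse via $A(x)=\sup\{\alpha: x\in\cortea{A}\}\leq\sup\{\alpha: x\in\cortea{B}\}=B(x)$, with the forward direction as you have it, and the attainment discussion omitted entirely.
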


The problem with this order is that it {does not generalize} the usual order on the real numbers. In fact,  given $x,y\in \con{R}$ such that $x< y$, we have that {$\crispy{x}\not\leq_Z \crispy{y}$}. 

Klir and Yuan in \cite{Klir1995} proposed the following partial order on  $\mathcal{F}(\con{R})$: 

Let $A$ and $B\in \NFR$. Then 
$$A\leq_{KY} B \Longleftrightarrow A\wedge B=A.$$

\begin{proposicion} \cite[p. 114]{Klir1995}\label{teoordenalphacorte}
 Given fuzzy number $A$ and $B$, the following assertions are equivalents
 \begin{enumerate}
  \item $A\leq_{KY} B$;
  \item $A\vee B=B$;
  \item $\cortea{A} \leq_{KM} \cortea{B}$ for each $\alpha\in~]0,1]$.
 \end{enumerate}
\end{proposicion}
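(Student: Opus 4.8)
The plan is to prove the equivalence of the three assertions by establishing a cycle of implications, with the bridge to $\alpha$-cuts being the crucial middle step. Concretely, I would show $(1)\Rightarrow(3)$, then $(3)\Rightarrow(2)$, and finally $(2)\Rightarrow(1)$, so that all three become equivalent. The whole argument leans on the fact, already available from the preceding proposition, that $\langle\mathcal{F}(\con{R}),\wedge,\vee\rangle$ is a lattice, together with the way the operations $\wedge$ and $\vee$ on fuzzy numbers interact with $\alpha$-cuts.

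The first thing I would do is record a lemma (or invoke the characterization theorem for $\alpha$-cuts, Proposition~\ref{teolevel}, plus the sup-min formulas defining $A\wedge B$ and $A\vee B$) to the effect that, for all $\alpha\in(0,1]$,
\begin{equation}
\cortea{(A\wedge B)}=\{\min(y,z):y\in\cortea{A},\ z\in\cortea{B}\}\quad\text{and}\quad\cortea{(A\vee B)}=\{\max(y,z):y\in\cortea{A},\ z\in\cortea{B}\}.
\end{equation}
Writing $\cortea{A}=[\li{A}_\alpha,\ri{A}_\alpha]$ and $\cortea{B}=[\li{B}_\alpha,\ri{B}_\alpha]$, these become $\cortea{(A\wedge B)}=[\min(\li{A}_\alpha,\li{B}_\alpha),\min(\ri{A}_\alpha,\ri{B}_\alpha)]$ and similarly with $\max$ for $A\vee B$. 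Granting this, $A\wedge B=A$ means that for every $\alpha$ one has $\min(\li{A}_\alpha,\li{B}_\alpha)=\li{A}_\alpha$ and $\min(\ri{A}_\alpha,\ri{B}_\alpha)=\ri{A}_\alpha$, i.e.\ $\li{A}_\alpha\le\li{B}_\alpha$ and $\ri{A}_\alpha\le\ri{B}_\alpha$, which is exactly $\cortea{A}\leq_{KM}\cortea{B}$; this gives $(1)\Leftrightarrow(3)$ directly. The same computation with $\max$ shows $A\vee B=B$ is equivalent to the same pair of inequalities, giving $(2)\Leftrightarrow(3)$, and we are done. (One must be slightly careful that a fuzzy number is determined by its family of $\alpha$-cuts for $\alpha\in(0,1]$ — this follows from $A(x)=\sup\{\alpha:x\in\cortea{A}\}$, which is part of the representation in Proposition~\ref{teolevel}.)

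An alternative, purely lattice-theoretic route avoids $\alpha$-cuts for the $(1)\Leftrightarrow(2)$ part: in any lattice, $a\wedge b=a$ iff $a\vee b=b$ (both say $a\le b$ in the induced order), so once we know $\langle\mathcal{F}(\con{R}),\wedge,\vee\rangle$ is a lattice, $(1)\Leftrightarrow(2)$ is immediate and only $(1)\Leftrightarrow(3)$ needs the $\alpha$-cut computation. I would probably present it this way to keep the argument short, using the $\alpha$-cut lemma only once.

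\textbf{The main obstacle} is proving the $\alpha$-cut lemma, i.e.\ that the sup-min definition of $A\wedge B$ really produces the interval $[\min(\li{A}_\alpha,\li{B}_\alpha),\min(\ri{A}_\alpha,\ri{B}_\alpha)]$ at level $\alpha$. The inclusion $\supseteq$ is easy: if $y\in\cortea{A}$ and $z\in\cortea{B}$ then $\min(A(y),B(z))\ge\alpha$, so $(A\wedge B)(\min(y,z))\ge\alpha$. The reverse inclusion — that $(A\wedge B)(x)\ge\alpha$ forces $x$ to be of the form $\min(y,z)$ with $y\in\cortea{A}$, $z\in\cortea{B}$ — requires knowing the supremum in the definition is actually attained, which is where upper semi-continuity of fuzzy numbers (equivalently, closedness of the $\alpha$-cuts, condition (ii) of Definition~\ref{def-FN}) and boundedness of the supports enter. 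This is essentially the content of the fact that $\langle\mathcal{F}(\con{R}),\wedge,\vee\rangle$ is well-defined as stated in the cited proposition, so I would either cite it or spell out the compactness argument; either way, the remainder of the proof is a routine unwinding of definitions.
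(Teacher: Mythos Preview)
The paper does not give its own proof of this proposition; it is quoted as a known result from Klir and Yuan \cite[p.~114]{Klir1995} and used without argument. Your sketch is correct and is essentially the standard derivation found there: the key $\alpha$-cut identities
\[
\cortea{(A\wedge B)}=[\min(\li{A}_\alpha,\li{B}_\alpha),\min(\ri{A}_\alpha,\ri{B}_\alpha)],\qquad
\cortea{(A\vee B)}=[\max(\li{A}_\alpha,\li{B}_\alpha),\max(\ri{A}_\alpha,\ri{B}_\alpha)]
\]
immediately give $(1)\Leftrightarrow(3)$ and $(2)\Leftrightarrow(3)$, and the lattice absorption law handles $(1)\Leftrightarrow(2)$ for free. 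Your identification of the only genuinely nontrivial step---attainment of the supremum in the sup-min formula, which is what makes the reverse inclusion in the $\alpha$-cut lemma work---is accurate, and the compactness argument you outline (closedness plus boundedness of $\alpha$-cuts from Definition~\ref{def-FN}) is the right way to close it.
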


Observe that the Klir-Yuan partial order{, when restricted to intervals, corresponds} to the  Kulisch-Miranker order  and when {restricted} to real numbers it {agrees} with the usual order.
The problem {is that} there are {pairs} of fuzzy numbers which are non-comparable under this order. The Figures \ref{f:incomparables1} and \ref{f:incomparables2} present the two generic cases of pairs of fuzzy numbers which are non-comparable by the partial order $\leq_{KY}$ and therefore, $\leq_{KY}$ is not a linear order.
\begin{figure}[h]
 \centering
  \subfloat[]{
   \label{f:incomparables1}
    \includegraphics[width=0.45\textwidth]{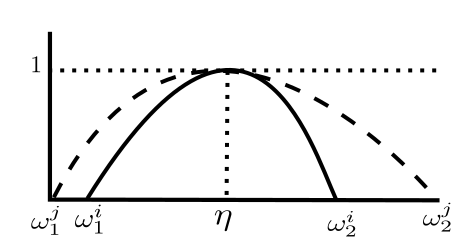}}\\
      \subfloat[]{
   \label{f:incomparables2}
    \includegraphics[width=0.45\textwidth]{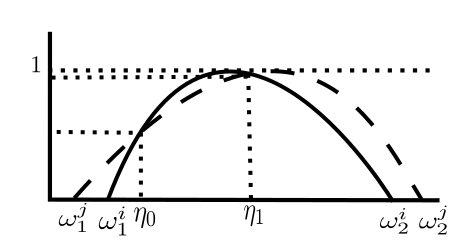}}
           \label{f:numeros}
           \caption{General cases of pairs of non-comparable fuzzy number with respect the Klir-Yuan order.}
 \end{figure}
 
 From the above observation, we get the following characterization of the non-comparable fuzzy numbers for this order.
 
 \begin{corolario}
Let  $A$ and $B$ be fuzzy numbers. $A$ and $B$ are non-comparable in the order $\leq_{KY}$, denoted by $A\parallel_{KY} B$ if, and only if, 
\begin{enumerate}
 \item there exists $\alpha\in~]0,1]$ such that $\cortea{A} \Subset\cortea{B}$ or $\cortea{B} \Subset \cortea{A}$; or
 \item there exist $\alpha,\beta\in~]0,1]$ such that $\cortea{A} <_{KM} \cortea{B}$ and $\nicefrac{B}{\beta} <_{KM} \nicefrac{A}{\beta}$.
\end{enumerate}
 \end{corolario}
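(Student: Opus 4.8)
The plan is to transfer the whole statement to the level of $\alpha$-cuts via Proposition~\ref{teoordenalphacorte}, which says $A\leq_{KY}B$ if and only if $\cortea{A}\leq_{KM}\cortea{B}$ for every $\alpha\in(0,1]$. Negating this characterisation on both sides, $A\parallel_{KY}B$ holds precisely when there is some $\alpha\in(0,1]$ with $\cortea{A}\not\leq_{KM}\cortea{B}$ \emph{and} some (a priori different) $\beta\in(0,1]$ with $\nicefrac{B}{\beta}\not\leq_{KM}\nicefrac{A}{\beta}$. Before starting I would point out that the inclusions in item~(1) have to be read as the strict relation $\Subset$ of Section~2: when $\cortea{A}=\cortea{B}$ the non-strict inclusion holds while the cuts stay $\leq_{KM}$-comparable, so with $\subseteq$ the ``only if'' direction would fail, whereas with $\Subset$ it does not. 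This is the reading used below.

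The only computational step is an elementary dichotomy for a single pair of intervals $I,J\in\con{IR}$. Since $\leq_{KM}$ is a partial order, $I\not\leq_{KM}J$ holds iff either $J<_{KM}I$, or $I$ and $J$ are $\leq_{KM}$-incomparable; and, comparing endpoints, two intervals are $\leq_{KM}$-incomparable iff $I\Subset J$ or $J\Subset I$. Thus $I\not\leq_{KM}J\Longleftrightarrow(J<_{KM}I)\vee(I\Subset J)\vee(J\Subset I)$. I would establish this by the obvious four-way check on the endpoint inequalities, together with the trivial converses: $J<_{KM}I$ gives $I\not\leq_{KM}J$ by antisymmetry, while $I\Subset J$ forces $\underline{I}>\underline{J}$ (hence $I\not\leq_{KM}J$) and $J\Subset I$ forces $\overline{I}>\overline{J}$ (hence again $I\not\leq_{KM}J$).

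For the forward implication, assume $A\parallel_{KY}B$ and fix $\alpha$ with $\cortea{A}\not\leq_{KM}\cortea{B}$ and $\beta$ with $\nicefrac{B}{\beta}\not\leq_{KM}\nicefrac{A}{\beta}$. Apply the dichotomy to the pair $(\cortea{A},\cortea{B})$ and to the pair $(\nicefrac{B}{\beta},\nicefrac{A}{\beta})$. If either application lands in the ``incomparable'' alternative, then at that level one cut is $\Subset$ the other, so item~(1) holds. Otherwise both land in the first alternative, giving $\cortea{B}<_{KM}\cortea{A}$ and $\nicefrac{A}{\beta}<_{KM}\nicefrac{B}{\beta}$, which is exactly item~(2) after swapping the names of the two (existentially quantified) indices.

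For the converse, if item~(1) holds, say $\cortea{A}\Subset\cortea{B}$ for some $\alpha$, then $\underline{\cortea{A}}>\underline{\cortea{B}}$ and $\overline{\cortea{A}}<\overline{\cortea{B}}$, so $\cortea{A}\not\leq_{KM}\cortea{B}$ and $\cortea{B}\not\leq_{KM}\cortea{A}$; by Proposition~\ref{teoordenalphacorte} this gives $A\not\leq_{KY}B$ and $B\not\leq_{KY}A$, i.e.\ $A\parallel_{KY}B$ (the case $\cortea{B}\Subset\cortea{A}$ is symmetric). If item~(2) holds, then $\cortea{A}<_{KM}\cortea{B}$ forces $\cortea{B}\not\leq_{KM}\cortea{A}$ by antisymmetry, hence $B\not\leq_{KY}A$, and likewise $\nicefrac{B}{\beta}<_{KM}\nicefrac{A}{\beta}$ forces $\nicefrac{A}{\beta}\not\leq_{KM}\nicefrac{B}{\beta}$, hence $A\not\leq_{KY}B$; again $A\parallel_{KY}B$. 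There is no analytic content here; the one place to be careful is the bookkeeping in the dichotomy — which interval sits on which side of $\leq_{KM}$, and which is strictly inside which — together with the fact that the witnessing levels $\alpha$ and $\beta$ are genuinely allowed to differ, this last point being exactly what separates the ``crossing'' picture of Figure~\ref{f:incomparables2} from the ``nested'' picture of Figure~\ref{f:incomparables1}. So the (mild) obstacle I expect is merely phrasing the interval dichotomy cleanly enough that both implications reduce to it in one line.
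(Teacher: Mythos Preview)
The paper gives no explicit proof of this corollary: it is stated immediately after Proposition~\ref{teoordenalphacorte} and Figures~\ref{f:incomparables1}--\ref{f:incomparables2} with only the remark ``from the above observation, result clear''. Your argument is precisely the natural unpacking of that remark --- transfer to $\alpha$-cuts via Proposition~\ref{teoordenalphacorte} and then classify, for a single pair of intervals, the ways $\leq_{KM}$ can fail --- so the approach matches the paper's intended one and the execution is correct.

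Your observation about reading the inclusions in item~(1) as the strict $\Subset$ of Section~2.1 rather than $\subseteq$ is not merely a stylistic choice but a genuine correction of the printed statement: with the non-strict $\subseteq$ the ``if'' direction is false (take $A=\widetilde{[0,1]}$ and $B=\widetilde{[0,2]}$; then $\cortea{A}\subseteq\cortea{B}$ for every $\alpha$, yet $A\leq_{KY}B$). The paper's own Figures~\ref{f:incomparables1}--\ref{f:incomparables2} make clear that the intended picture in item~(1) is a proper nesting, so your reading is the right one.
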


 \begin{observacion}
  \label{rem-leqLK-unbounded} Since for each positive real number $r$ and $A\in \NFR$ we have that {$\cortea{A_{-r}}<_{KM}\cortea{ A }<_{KM}\cortea{ A_{+r}}$, for all $\alpha\in~]0,1]$ and }where $A_{-r}(x)=A(x+r)$ and $A_{+r}(x)=A(x-r)$, the distributive lattice $\langle\mathcal{F}(\con{R}){,\wedge,\vee},\leq_{KY}\rangle$ is not bounded and therefore not a complete lattice. {Also, $\langle\mathcal{F}(\con{R}){,+,\cdot},\leq_{KY}\rangle$ is a subdistributive lattice (see \cite[Pag. 104, point 4]{Klir1995}) whinch is not bounded and therefore it is not a complete lattice. }
 \end{observacion}

\subsubsection{Wang-Wang order}

Wei Wang and Zhenyuan in \cite{wang2014total} propose a total order for the set of fuzzy numbers based  on $\alpha$-cuts from a special type of sequence in $[0,1]$.

\begin{definicion}(\cite{wang2014total})\label{defdensidadS}
Let $S=(\alpha_i)_{i\in\con{Z}^+}$ be a sequence in $]0,1]$, where $\con{Z}^+$ is the set of positive integers. Then $S$  is upper dense  if, for every point $x\in~ ]0, 1]$ and any $\varepsilon > 0$, there exists $i\in \con{Z}^+$ such that
$\alpha_i\in [x, x + \varepsilon[$. 
\end{definicion}

\begin{observacion} \label{obs-S}
If  $S=(\alpha_i)_{i\in\con{Z}^+}$   is an upper dense sequence in $]0,1]$ then
\begin{enumerate}
 \item $\inf S=0$ and $\sup S=1$;
 \item for all $n\in \con{Z}^+$ {there are $i<j\in \con{Z}^+$} such that {$n\leq i$}, $\alpha_i < \alpha_{i+1}$ and $\alpha_{j+1}< \alpha_j$;
 \item for all $n\in \con{Z}^+$ the sequence $S_n=(\alpha'_{i})_{i\in\con{Z}^+}$ with $\alpha'_i=\alpha_{n+i}$ is also an upper dense sequence in $]0,1]$;
 \item for all $\alpha\in~]0,1]$, the sequence $S_\alpha=(\alpha'_{i})_{i\in\con{Z}^+}$ with $\alpha'_i=\alpha_{i-1}$ for each $i\geq 2$ and $\alpha'_1=\alpha$  is also an upper dense sequence in $]0,1]$;
 \item in Definition \ref{defdensidadS}, when say $\alpha_i\in [x, x + \varepsilon[$, we have that $x + \varepsilon$ not need belong to $[0,1]$ but $\alpha_i\in~]0,1]$ and therefore $\alpha_i\in [x, x + \varepsilon[~\cap~ [x,1]$.
\end{enumerate}
\end{observacion}

\begin{observacion}
When you take $x=1$ in the Definition \ref{defdensidadS} then there is a $i\in\con{Z}^+$ such that $\alpha_i\in [1,1+\epsilon]$. Then in all upper dense sequence $S=
(\alpha_i)_{i\in\con{Z}^+}$ exists $k$ such that $\alpha_k=1$. But by the Remark \ref{obs-S}-(3), the  sequence $S_{k+1}$ is upper dense and therefore $\alpha'_j=1$ for some $j\in\con{Z}^+$. So, each upper dense sequence has infinite copies of 1.
\end{observacion}

Examples of these sequences can be found in  \cite[Example 1 and 2]{wang2014total}.

\begin{definicion}[\cite{wang2014total}]\label{def3.3ww}
Let $A$ be fuzzy numbers. For a given upper dense sequence $S=(\alpha_i)_{i \in \con{Z}^+}$ in $]0,1]$, we define $\operador{c_i}{\NFR}{\con{R}}$ given by
$$c_i(A)=\begin{cases}\begin{array}{ll}
r^{\ast}(\alpha_{\frac{i}{2}})-l^{\ast}(\alpha_{\frac{i}{2}}),&\textrm{ if }i\textrm{ is even},\\
l^{\ast}(\alpha_{\frac{i+1}{2}})+r^{\ast}(\alpha_{\frac{i+1}{2}}),&\textrm{ if }i\textrm{ is odd}.
\end{array}\end{cases}$$     
\end{definicion}

\begin{definicion}[\cite{wang2014total}]\label{wang2014total-def}
Let $A$ and $B$ be two fuzzy numbers and an upper dense sequence $S=(\alpha_i)_{i \in \con{Z}^+}$ in $]0,1]$. We say that $A<_{WW}^SB$ 
when there exists a positive integer $n_0$ such that $c_{n_0}(A)<c_{n_0}(B)$  and $c_i(A)=c_i(B)$ for all positive integers $i<n_0$. We say that $A\leq_{WW}^S B$ if, and only if, $A<_{WW}^S B$ or $A=B$.
\end{definicion}

As it is well know, any fuzzy set $A$ can be fully identified with its $\alpha$-cuts in the following sense:
$$A(x)=\sup_{\alpha\in (0,1]} \alpha\cdot \chi_{\cortea{A}}(x),$$
where $\chi_{\cortea{A}}$ is the characteristic function of the interval $\cortea{A}$. This is called decomposition theorem \cite[Theorems 2.5]{Klir1995}. There are some  variants of this theorem  such as \cite[Theorem 3]{wang2014total} and  \cite[Theorems 2.6 and 2.7]{Klir1995}. In particular, Wang and Wang variant proves that  any fuzzy number is recovered from just a countably subset of their $\alpha$-cuts.

\begin{teorema}\cite[Theorem 3]{wang2014total} \label{teo-decIV}
 Let $A$ be a fuzzy number and $S=(\alpha_i)_{i\in\con{Z}^+}$ be an upper dense sequence  in $]0,1]$. Then 
 $$A(x)=\sup_{i\in \con{Z}^+} \alpha\cdot \chi_{\corteai{A}{i}}(x).$$
\end{teorema}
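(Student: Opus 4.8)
The goal is to show that for a fuzzy number $A$ and an upper dense sequence $S$ in $(0,1]$, the identity $A(x)=\sup_{\alpha\in S}\alpha\cdot\chi_{\cortea{A}}(x)$ holds pointwise. Write $f(x):=\sup_{\alpha\in S}\alpha\cdot\chi_{\cortea{A}}(x)$. The plan is to prove $f(x)=A(x)$ by two inequalities. The inequality $f(x)\leq A(x)$ is the easy direction: if $\alpha\in S$ contributes a nonzero term, then $\chi_{\cortea{A}}(x)=1$, i.e. $x\in\cortea{A}$, which by the definition of $\alpha$-cut means $A(x)\geq\alpha$; hence every term in the supremum defining $f(x)$ is bounded above by $A(x)$, so $f(x)\leq A(x)$. (When no $\alpha\in S$ satisfies $x\in\cortea{A}$, the supremum is $\sup\emptyset=0\leq A(x)$, using the convention that the empty supremum is $0$, consistent with the membership codomain $[0,1]$.)

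For the reverse inequality $A(x)\geq$ should read $A(x)\leq f(x)$: fix $x\in\con{R}$ and set $\beta:=A(x)$. If $\beta=0$ there is nothing to prove, so assume $\beta>0$. The key point is to approximate $\beta$ from above within $S$ and use monotonicity of $\alpha$-cuts. Let $\varepsilon>0$ be arbitrary with $\varepsilon<\beta$ (we only need small $\varepsilon$). By upper density of $S$ applied to the point $\beta-\varepsilon\in(0,1]$, there exists $\delta\in S$ with $\delta\in[\beta-\varepsilon,\beta-\varepsilon+\varepsilon)=[\beta-\varepsilon,\beta)$; in particular $\delta\leq\beta=A(x)$, so $x\in\cortea[\delta]{A}$, i.e. $\chi_{\cortea[\delta]{A}}(x)=1$, and hence $f(x)\geq\delta\cdot 1=\delta\geq\beta-\varepsilon$. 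Since $\varepsilon>0$ was arbitrary, $f(x)\geq\beta=A(x)$. Combining the two inequalities gives $f(x)=A(x)$ for all $x$, as required.

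The only subtlety — and the single place one must be careful — is the endpoint/boundary behaviour of the upper density definition: we need $\delta<\beta$ strictly (so that $\delta\leq A(x)$ is guaranteed even though a priori we'd want $\delta\le\beta$, which is fine, but more importantly we must land \emph{at or below} $\beta$, not above it). Definition \ref{defdensidadS} gives $\delta\in[x,x+\varepsilon)$ for the queried point $x$; querying at $\beta-\varepsilon$ yields $\delta\in[\beta-\varepsilon,\beta)$, so indeed $\delta<\beta$, which is exactly what is needed and in fact slightly more than needed. One should also note the degenerate case $\beta=A(x)$ with no room below, i.e. if $A(x)$ is so small that $(0,1]$-membership of $\beta-\varepsilon$ could fail; but since we assumed $\beta>0$ we may always choose $\varepsilon<\beta$, keeping $\beta-\varepsilon\in(0,1]$, so the density hypothesis genuinely applies. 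No use of the interval structure of $\cortea{A}$, of normality, or of boundedness of the support is needed — only the nestedness of $\alpha$-cuts ($\alpha\leq\alpha'\Rightarrow\cortea[\alpha']{A}\subseteq\cortea[\alpha]{A}$) implicit in $\chi_{\cortea{A}}(x)=1\iff A(x)\geq\alpha$ — so the argument is short and the main ``obstacle'' is merely stating the empty-supremum convention cleanly.
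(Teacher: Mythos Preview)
The paper does not supply its own proof of this theorem: it is quoted verbatim as \cite[Theorem 3]{wang2014total} and used as an external ingredient, so there is no in-paper argument to compare against. Your proof is correct and is essentially the standard one for decomposition theorems of this type: the inequality $f(x)\le A(x)$ is immediate from the definition of $\alpha$-cuts, and for $A(x)\le f(x)$ you correctly exploit upper density by querying at $\beta-\varepsilon$ with parameter $\varepsilon$ to obtain $\delta\in S\cap[\beta-\varepsilon,\beta)$, so that $x\in\corteb{A}{\delta}$ and $f(x)\ge\delta\ge\beta-\varepsilon$. One small remark: in the case $\beta=A(x)=1$ you can argue even more directly, since Definition~\ref{defdensidadS} applied at the point $1$ forces $1\in S$, giving $f(x)\ge 1$ outright; your $\varepsilon$-argument also covers this case, of course.
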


\begin{corolario}\label{coro-teo-decIV}
Let $A$ and $B$ be two fuzzy numbers and $S=(\alpha_i)_{i\in\con{Z}^+}$ be an upper dense sequence  in $]0,1]$. $A=B$ if, and only if, $\corteai{A}{i}=\corteai{B}{i}$, for all $i\in \con{Z}^+$.
\end{corolario}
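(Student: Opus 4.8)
The plan is to derive the statement directly from the Wang--Wang decomposition theorem (Theorem \ref{teo-decIV}), so the proof splits into the two implications of the biconditional.

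For the forward direction, I would simply unwind the definition of an $\alpha$-cut: if $A=B$ as functions $\con{R}\to[0,1]$, then for each $\alpha\in(0,1]$ one has $\cortea{A}=\{x\in\con{R}:A(x)\geq\alpha\}=\{x\in\con{R}:B(x)\geq\alpha\}=\cortea{B}$, and in particular this equality holds for every $\alpha\in S$. This step is routine and makes no use of $S$ being an upper dense sequence.

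For the converse — the substantive half — I would assume $\cortea{A}=\cortea{B}$ for all $\alpha\in S$ and note first that equal sets have identical characteristic functions, so $\chi_{\cortea{A}}=\chi_{\cortea{B}}$ on $\con{R}$ for each such $\alpha$. Then I would fix an arbitrary $x\in\con{R}$ and apply Theorem \ref{teo-decIV} to $A$ and to $B$:
$$A(x)=\sup_{\alpha\in S}\alpha\cdot\chi_{\cortea{A}}(x)=\sup_{\alpha\in S}\alpha\cdot\chi_{\cortea{B}}(x)=B(x),$$
the middle equality holding because the families of reals $(\alpha\cdot\chi_{\cortea{A}}(x))_{\alpha\in S}$ and $(\alpha\cdot\chi_{\cortea{B}}(x))_{\alpha\in S}$ are indexed by the same set $S$ and are termwise equal, hence have the same supremum. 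Since $x$ was arbitrary, $A=B$.

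I do not expect any genuine obstacle here; the only point deserving a word of care is that the supremum in Theorem \ref{teo-decIV} is taken over the one fixed index set $S$ for both fuzzy numbers, which is precisely what lets a countable batch of $\alpha$-cuts pin down the whole membership function. This is exactly the content of the Wang--Wang refinement of the classical decomposition theorem and is what makes the corollary more than the tautological version one would get with $S=(0,1]$.
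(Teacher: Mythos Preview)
Your proposal is correct and follows exactly the approach the paper intends: the paper's proof reads simply ``Straightforward from Theorem \ref{teo-decIV},'' and what you have written is precisely the unpacking of that phrase, applying the Wang--Wang decomposition to both $A$ and $B$ over the common index set $S$.
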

\begin{proof}
 Straightforward from Theorem \ref{teo-decIV}.
\end{proof}

\begin{teorema}\cite{wang2014total}\label{ordenwang}
Let $S$ be an upper dense sequence in $]0,1]$. Then $\leq_{WW}^S$ is a linear order on $\NFR$.
\end{teorema}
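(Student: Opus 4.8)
The plan is to show that $\leq_{WW}^S$ is reflexive, antisymmetric, transitive, and total, working throughout with the sequence of real-valued functionals $c_i$ attached to the fixed upper dense sequence $S$. The key structural observation is that a fuzzy number $A$ is completely encoded, via Theorem \ref{teo-decIV} and Corollary \ref{coro-teo-decIV}, by the countable family of intervals $\{\cortea{A} : \alpha \in S\}$, hence by the real sequence $(c_i(A))_{i\in\con{Z}^+}$: indeed $c_{2k}(A)$ and $c_{2k-1}(A)$ together recover $l^\ast(\alpha_k)$ and $r^\ast(\alpha_k)$, so the map $A \mapsto (c_i(A))_{i}$ is injective. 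Thus $\leq_{WW}^S$ is, by construction, the pullback of the lexicographic order on $\con{R}^{\con{Z}^+}$ along this injective map, and the whole proof amounts to checking that the lexicographic order restricted to the image is a linear order — which is essentially automatic once injectivity is in hand.

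\medskip
\noindent\textbf{Reflexivity and antisymmetry.} Reflexivity holds trivially because the definition of $\leq_{WW}^S$ includes the disjunct $A = B$. For antisymmetry, suppose $A \leq_{WW}^S B$ and $B \leq_{WW}^S A$ with $A \neq B$. Then $A <_{WW}^S B$ gives an index $n_0$ with $c_{n_0}(A) < c_{n_0}(B)$ and $c_i(A) = c_i(B)$ for $i < n_0$, while $B <_{WW}^S A$ gives an index $m_0$ with $c_{m_0}(B) < c_{m_0}(A)$ and $c_i(B) = c_i(A)$ for $i < m_0$. Comparing the two and using trichotomy on $\con{R}$ at the smaller of $n_0,m_0$ yields a contradiction (if $n_0 = m_0$ we get $c_{n_0}(A) < c_{n_0}(B) < c_{n_0}(A)$; if say $n_0 < m_0$ then $c_{n_0}(A) = c_{n_0}(B)$ contradicts $c_{n_0}(A) < c_{n_0}(B)$). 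Hence $A = B$.

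\medskip
\noindent\textbf{Transitivity.} Assume $A <_{WW}^S B$ with witness $n_0$ and $B <_{WW}^S C$ with witness $m_0$; let $k = \min(n_0, m_0)$. For $i < k$ we have $c_i(A) = c_i(B) = c_i(C)$. At $i = k$: if $n_0 \le m_0$ then $c_k(A) < c_k(B) \le c_k(C)$ (the second inequality is equality when $k < m_0$ and strict when $k = m_0$), and symmetrically if $m_0 \le n_0$; either way $c_k(A) < c_k(C)$, so $A <_{WW}^S C$ with witness $k$. Combining with the trivial cases where one of the relations is an equality gives transitivity of $\leq_{WW}^S$.

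\medskip
\noindent\textbf{Totality — the main obstacle.} Given distinct fuzzy numbers $A \neq B$, I must produce an index at which their $c$-sequences first differ. By Corollary \ref{coro-teo-decIV}, since $A \neq B$ there is some $\alpha \in S$, say $\alpha = \alpha_j$, with $\cortea{A} \neq \cortea{B}$, i.e. $l^\ast_A(\alpha_j) \neq l^\ast_B(\alpha_j)$ or $r^\ast_A(\alpha_j) \neq r^\ast_B(\alpha_j)$; consequently $c_{2j-1}(A) \neq c_{2j-1}(B)$ or $c_{2j}(A) \neq c_{2j}(B)$ — here one should note that $c_{2j-1}$ and $c_{2j}$ are the sum and difference of the endpoints, and sum-and-difference determine the pair of endpoints, so non-equality of the interval forces non-equality of at least one of these two functionals. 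Hence the set $\{i : c_i(A) \neq c_i(B)\}$ is a nonempty subset of $\con{Z}^+$; let $n_0$ be its least element. Then $c_i(A) = c_i(B)$ for all $i < n_0$ and $c_{n_0}(A) \neq c_{n_0}(B)$, so by trichotomy on $\con{R}$ either $c_{n_0}(A) < c_{n_0}(B)$, giving $A <_{WW}^S B$, or $c_{n_0}(B) < c_{n_0}(A)$, giving $B <_{WW}^S A$. Thus any two fuzzy numbers are comparable. The only genuine subtlety is the bookkeeping that links ``the $\alpha$-cuts differ at some $\alpha\in S$'' to ``some $c_i$ differs'', which relies precisely on Wang and Wang's interleaving of sums and differences of endpoints across even and odd indices; everything else is the standard verification that a lexicographic order pulled back along an injection is linear.
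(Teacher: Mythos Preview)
Your proof is correct. The paper itself gives no proof of this theorem: it is simply cited from \cite{wang2014total}, so there is no in-paper argument to compare against. Your route---identifying $\leq_{WW}^S$ as the pullback of the lexicographic order on $\con{R}^{\con{Z}^+}$ along the injective map $A\mapsto (c_i(A))_i$, with injectivity supplied by Corollary~\ref{coro-teo-decIV} together with the observation that the sum and difference of endpoints recover the interval---is exactly the argument one expects (and is essentially how Wang and Wang proceed in the original source): reflexivity, antisymmetry and transitivity are the routine lexicographic checks, and totality reduces to the existence of a least index where the $c$-sequences differ, which your use of Corollary~\ref{coro-teo-decIV} and the sum/difference bookkeeping handles cleanly.
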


\begin{observacion}
There exist several pairs of upper dense sequences $S_1$ and $S_2$ in $]0,1]$  which determine distinct linear orders, i.e.  $\leq_{WW}^{S_1}\neq  \leq_{WW}^{S_2}$. Therefore, we are dealing with a family of linear orders (see Example 3 \cite{wang2014total}).
\end{observacion}

\section{Admissible orders on fuzzy numbers}

\begin{definicion}\label{def-ordenadmisible}
Let $\leqq$ and $\preceq$ be two orders on $\NFR$. The order $\preceq$ is called an admissible order w.r.t. $\prt{\NFR,\leqq}$, if 
\begin{enumerate}[labelindent=\parindent, leftmargin=*,label=]
\item[(i)] $\preceq$ is a linear order on $\NFR$;
\item[(ii)] for all $A$, $B$ in $\NFR$, $A\preceq B$ whenever $A\leqq B$.
\end{enumerate}
\end{definicion}

Thus, an order $\preceq$ on $\NFR$ is admissible for $\prt{\NFR,\leqq}$, if it is linear and refines the order $\leqq$. In particular, when the order $\leqq$ is $\leq_{KY}$ we will call $\preceq$ just of admissible order on $\NFR$. Furthermore, if $\leqq$ is a linear order then $\preceq$ and $\leqq$ are the same.

\begin{proposicion}
Let $\preceq$ be an admissible order on $\NFR$. Then, there are not greatest or smallest elements in $\NFR$.
\end{proposicion}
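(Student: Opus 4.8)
The plan is to argue directly, without contradiction: I would show that \emph{every} $A\in\mathcal{F}(\mathds{R})$ has fuzzy numbers lying strictly below and strictly above it in $\preceq$, so that $A$ can be neither the $\preceq$-greatest nor the $\preceq$-smallest element. Since, by condition (ii) of Definition~\ref{def-ordenadmisible}, $\preceq$ refines $\leq_{KY}$, it suffices to produce such fuzzy numbers already for $\leq_{KY}$, and horizontal translations do this immediately.

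Concretely, fix $A\in\mathcal{F}(\mathds{R})$ and define $A^{+}(x):=A(x-1)$ and $A^{-}(x):=A(x+1)$ for $x\in\mathds{R}$. First I would verify $A^{+},A^{-}\in\mathcal{F}(\mathds{R})$ using Definition~\ref{def-FN}: normality is inherited since $\ker(A^{+})=\ker(A)+1\neq\emptyset$ and $\ker(A^{-})=\ker(A)-1\neq\emptyset$; the supports $S(A^{\pm})=S(A)\pm1$ remain bounded; and from $A^{+}(x)\geq\alpha\iff A(x-1)\geq\alpha$ one gets, for every $\alpha\in(0,1]$, $\cortea{A^{+}}=\cortea{A}+1$, which is a closed interval because $\cortea{A}$ is (Definition~\ref{def-FN}(ii)), and symmetrically $\cortea{A^{-}}=\cortea{A}-1$. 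Writing $\cortea{A}=[p_{\alpha},q_{\alpha}]$, this says $\cortea{A^{+}}=[p_{\alpha}+1,q_{\alpha}+1]$ and $\cortea{A^{-}}=[p_{\alpha}-1,q_{\alpha}-1]$; this $\alpha$-cut computation is the only mildly technical point, and it is essentially immediate.

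Then for every $\alpha\in(0,1]$ we have $[p_{\alpha}-1,q_{\alpha}-1]\leq_{KM}[p_{\alpha},q_{\alpha}]\leq_{KM}[p_{\alpha}+1,q_{\alpha}+1]$, i.e. $\cortea{A^{-}}\leq_{KM}\cortea{A}\leq_{KM}\cortea{A^{+}}$, so Proposition~\ref{teoordenalphacorte} gives $A^{-}\leq_{KY}A\leq_{KY}A^{+}$. Moreover $A^{-}\neq A\neq A^{+}$, because $S(A^{-}),S(A),S(A^{+})$ are nonempty bounded subsets of $\mathds{R}$ that are pairwise translates of one another by $\pm1$, and a nonempty bounded set never equals a nontrivial translate of itself. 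Applying condition (ii) of Definition~\ref{def-ordenadmisible} twice yields $A^{-}\preceq A\preceq A^{+}$, and since $\preceq$ is a linear order (hence antisymmetric) and $A^{-}\neq A\neq A^{+}$, we actually get $A^{-}\prec A\prec A^{+}$. As $A$ was arbitrary, $(\mathcal{F}(\mathds{R}),\preceq)$ has neither a greatest nor a smallest element.

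I do not expect a genuine obstacle: the argument rests only on the elementary fact that a translate of a fuzzy number is a fuzzy number whose $\alpha$-cuts are the translated $\alpha$-cuts, together with the characterization of $\leq_{KY}$ in Proposition~\ref{teoordenalphacorte} and the defining property of admissibility. (If one preferred to avoid translations, the characteristic fuzzy numbers $\widetilde{[n]}$ and $\widetilde{[-n]}$ for large $n$ would serve equally well, since once $n$ exceeds the bounds of $S(A)$ one has $\widetilde{[-n]}\leq_{KY}A\leq_{KY}\widetilde{[n]}$, with strict inequality upon enlarging $n$ if necessary.)
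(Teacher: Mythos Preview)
Your argument is correct and complete. The paper's own ``proof'' is literally the one line ``Straightforward from Definition~\ref{def-ordenadmisible}'', so you have simply supplied the details the authors omitted: exhibit, for any $A$, fuzzy numbers strictly below and above it in $\leq_{KY}$ (your translates $A^\pm$ work perfectly), and then invoke that $\preceq$ refines $\leq_{KY}$.
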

\begin{proof} 
Straightforward from Definition \ref{def-ordenadmisible} and Remark \ref{rem-leqLK-unbounded}.
\end{proof}

\begin{definicion}
Let $A,B\in\NFR$ be such that $A\neq B$ and $S=(\alpha_i)_{i\in \con{Z}^+}$ be an  upper dense sequence in $]0,1]$. Then  define $m(A,B)$  by
$$m(A,B)= \left\{\begin{array}{ll}
                     \min\{i\in \con{Z}^+: \corteai{A}{i}\neq \corteai{B}{i}\}, & \mbox{ if $A\neq B$}, \\
                     0, & \mbox{otherwise}.
                 \end{array}
                 \right.
                    $$
\end{definicion}
Observe that $m(A,B)=m(B,A)$ and that, by Corollary  \ref{coro-teo-decIV}, $m(A,B)$ is well defined.

\begin{proposicion}\label{lemanew}
Let  $S=(\alpha_i)_{i\in\con{Z}}$ be an upper dense sequence in $]0,1]$ and $A,B\in\NFR$.  $\{\alpha_i\in S:\corteai{A}{i}\neq \corteai{B}{i}\}\neq\{1\}$.
\end{proposicion}

\begin{proof} If $A=B$ then  $\{\alpha_i\in S:\corteai{A}{i}\neq \corteai{B}{i}\}=\emptyset \neq\{1\}$. If $A\neq B$ then, by Corollary \ref{coro-teo-decIV}, we have that $\corteai{A}{i}\neq \corteai{B}{i}$ for some $i\in \con{Z}^+$. Suppose that $\alpha_i=1$ then we have four cases:  $\ker(A)^-< \ker(B)^-$, $\ker(A)^- > \ker(B)^-$, $\ker(A)^+< \ker(B)^+$ or $\ker(A)^+> \ker(B)^+$. In the first case, take $x\in \ker(A)^-$. Then $A(x)=1$ and $B(x)<1$. So, for $\alpha=\frac{B(x)+1}{2}$, we have that $x\in \cortea{A}$ and $x\not\in \cortea{B}$, i.e.
$\cortea{A} \neq \cortea{B}$. So, since $S$ is an upper sequence in $]0,1]$, there exists $j\in \con{Z}^+$ such that $\alpha \leq \alpha_j < 1$. Since,  $x\in \corteai{A}{j}$ and $x\not\in\corteai{B}{j}$, then  $\corteai{A}{j}\neq \corteai{B}{j}$ and therefore, $\{\alpha_i\in S:\corteai{A}{i}\neq \corteai{B}{i}\}\neq\{1\}$.

The other three cases are similarly proved. 
\end{proof}

\begin{definicion}
\label{adinter}
Let   $S=(\alpha_i)_{i\in\con{Z}}$ be an upper dense sequence in $]0,1]$, $\preceq$ be an order on $\con{IR}$,  and $A,B\in\NFR$. Then,
$$A\unlhd^S B\Longleftrightarrow  A=B\mbox{ or } \corteai{A}{m(A,B)} \prec \corteai{B}{m(A,B)}.$$ 
\end{definicion}
 
 Observe that, taking as convention that $\corteai{A}{0}=[supp(A)^-,supp(A)^+]$  then $A\unlhd^S B\Longleftrightarrow   \corteai{A}{m(A,B)} \preceq \corteai{B}{m(A,B)}$.

\begin{teorema}\label{ZBM}
Let $\preceq$ be an admissible order on $\con{IR}$ and $S=(\alpha_i)_{i\in\con{Z}^+}$ be an upper dense sequence in $]0,1]$. The relation $\unlhd^S$ is an admissible order on $\NFR$.
\end{teorema}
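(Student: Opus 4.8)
The plan is to verify the two clauses of Definition~\ref{def-ordenadmisible} for $\unlhd$, using throughout that $\preceq$ is a linear order on $\con{IR}$ refining $\leq_{KM}$ (Definition~\ref{def-ordenadmisible1}) and that $S=(\alpha_i)_{i\in\con{Z}^+}$ is the fixed strictly increasing upper dense sequence implicit in Definition~\ref{adinter}. The elementary fact I would isolate first and reuse everywhere is this: since $S$ is strictly increasing, index order agrees with the order of the $\alpha_i$, and for $A\neq B$ the value $\alpha_m=\underset{\alpha_i}{\min}\{A,B\}$ is well defined (Corollary~\ref{coro-teo-decIV}) and equals $\alpha_n$ for the least index $n$ with $\corteai{A}{n}\neq\corteai{B}{n}$; in particular $\corteai{A}{i}=\corteai{B}{i}$ for every $i$ with $\alpha_i<\alpha_m$.

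First I would dispatch the easy order axioms. For reflexivity, $\underset{\alpha_i}{\min}\{A,A\}=1$ and the corresponding cut is $\ker(A)$, a closed interval (Remark~\ref{sobreelsoporteconteoremadecaracterizaciondefuzzy}), so $A\unlhd A$ follows from reflexivity of $\preceq$. For totality, if $A\neq B$ then at the common index $m$ the intervals $\corteai{A}{m},\corteai{B}{m}$ are $\preceq$-comparable, giving $A\unlhd B$ or $B\unlhd A$. For antisymmetry, if $A\unlhd B$ and $B\unlhd A$ with $A\neq B$, then at that same $m$ one would get $\corteai{A}{m}\preceq\corteai{B}{m}\preceq\corteai{A}{m}$, hence $\corteai{A}{m}=\corteai{B}{m}$ by antisymmetry of $\preceq$, contradicting the choice of $\alpha_m$.

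The heart of the proof, and the step I expect to demand the most care, is transitivity. Suppose $A\unlhd B$ and $B\unlhd C$. If two of $A,B,C$ coincide the conclusion $A\unlhd C$ is immediate (from a hypothesis or from reflexivity), so I would assume them pairwise distinct and set $\alpha_p=\underset{\alpha_i}{\min}\{A,B\}$, $\alpha_q=\underset{\alpha_i}{\min}\{B,C\}$, $\alpha_r=\underset{\alpha_i}{\min}\{A,C\}$, then argue by cases on $p$ versus $q$. If $p<q$, then for $i<p$ all three cuts agree while $\corteai{A}{p}\neq\corteai{B}{p}=\corteai{C}{p}$, so $r=p$ and $A\unlhd B$ gives $\corteai{A}{r}=\corteai{A}{p}\preceq\corteai{B}{p}=\corteai{C}{r}$, i.e.\ $A\unlhd C$; the case $q<p$ is symmetric through the hypothesis $B\unlhd C$. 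If $p=q=:s$, then for $i<s$ the cuts agree while $\corteai{A}{s}\neq\corteai{B}{s}$ and $\corteai{B}{s}\neq\corteai{C}{s}$, so linearity of $\preceq$ upgrades $\corteai{A}{s}\preceq\corteai{B}{s}\preceq\corteai{C}{s}$ to $\corteai{A}{s}\prec\corteai{C}{s}$; thus $\corteai{A}{s}\neq\corteai{C}{s}$, $r=s$, and $A\unlhd C$. The delicate part throughout is the bookkeeping that identifies the first index of disagreement for $(A,C)$ as $\min(p,q)$ (or $s$) — precisely where strict monotonicity of $S$ is used; everything else is inherited from $\preceq$.

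Finally, for clause (ii) I would take $A\leq_{KY}B$: the case $A=B$ is trivial, and if $A<_{KY}B$ then Proposition~\ref{teoordenalphacorte} gives $\cortea{A}\leq_{KM}\cortea{B}$ for all $\alpha\in(0,1]$, in particular $\corteai{A}{m}\leq_{KM}\corteai{B}{m}$ at $\alpha_m=\underset{\alpha_i}{\min}\{A,B\}$; since $\preceq$ refines $\leq_{KM}$ this yields $\corteai{A}{m}\preceq\corteai{B}{m}$, i.e.\ $A\unlhd B$. Together with the linearity established above, this shows $\unlhd$ is admissible with respect to $(\NFR,\leq_{KY})$.
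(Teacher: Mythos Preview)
Your proof is correct and follows essentially the same route as the paper: verify reflexivity, antisymmetry, transitivity, totality, and refinement of $\leq_{KY}$, all by reducing to properties of $\preceq$ at the first index $\alpha_m$ where the relevant cuts differ. The only noteworthy difference is that the paper compresses your transitivity case analysis ($p<q$, $q<p$, $p=q$) into the single claim ``clearly $\alpha_r=\min\{\alpha_j,\alpha_k\}=\underset{\alpha_i}{\min}\{A,C\}$'', whereas you spell out why this identification of the first $(A,C)$-disagreement holds and why the resulting $\preceq$-inequality is strict when $p=q$; your version is simply a more careful unpacking of the same argument.
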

\begin{proof}
Let $S=(\alpha_i)_{i\in\con{Z}}$  be an upper dense sequence in $]0,1]$. 

\noindent{\underline{Reflexivity}:} Straightforward from  Definition \ref{adinter}.

\noindent{\underline{Antisymmetry}:}  Let $A$, $B$ fuzzy numbers such that $A\unlhd^S B$ and $B\unlhd^S A$. Suppose that $A\neq B$ then, from Corollary \ref{coro-teo-decIV}, $\{i\in \con{Z}^+: \corteai{A}{i}\neq \corteai{B}{i}\}\neq \emptyset$. Let $m=m(A,B)=\min \{i\in \con{Z}^+: \corteai{A}{i}\neq \corteai{B}{i}\}=m(B,A)$, then, from the former $\corteai{A}{m}\preceq\corteai{B}{m}$ and $\corteai{B}{m}\preceq \corteai{A}{m}$. So, because $\preceq$ is an order, $\corteai{A}{m}= \corteai{B}{m}$, which is a contradiction.
Therefore, $A=B$.

\noindent{\underline{Transitivity}:} Let $A$, $B$, and $C$ be three fuzzy numbers such that $A\unlhd^S B$ and $B\unlhd^S C$. If $A=B$ or $B=C$ then trivially $A\unlhd^S C$ and if $A=C$ then, by antisymmetry, $A=B=C$. If  $A\neq B$ and $B\neq C$ then $A\lhd^S B$ and $B\lhd^S C$. So, $\corteai{A}{k}\prec\corteai{B}{k}$ and $\corteai{B}{m}\prec\corteai{C}{m}$, where $k=m(A,B)$ and $m=m(B,C)$. 
If $k \leq m$ then $\corteai{B}{k}\preceq \corteai{C}{k}$ and since $\corteai{A}{k}\prec \corteai{B}{k}$ then $\corteai{A}{k}\prec \corteai{C}{k}$. In addition, if $j<k$ then  $\corteai{A}{j}= \corteai{B}{j}$ and $\corteai{B}{j}= \corteai{C}{j}$, and therefore $\corteai{A}{j}= \corteai{C}{j}$. Therefore, $A\unlhd^S C$. Analogously, if $m< k$ we prove that $A\unlhd^S C$. This means that the relation is transitive.

\noindent{\underline{Totallity}:}  Let $A$ and $B$ be two fuzzy numbers such that $A\neq B$. Then, $\corteai{A}{m}\neq \corteai{B}{m}$ for $m=m(A,B)$. Thus,
 because $\preceq$ is total $\corteai{A}{m}\prec\corteai{B}{m}$ or $\corteai{B}{m}\prec \corteai{A}{m}$. 
 Therefore $A\unlhd^S B$ or $B\unlhd^S A$ for all $A,B\in\NFR$. 

\noindent{\underline{Refinement}:} Let $A$ and $B$ be two fuzzy numbers such that  $A\leq_{KY} B$. If $A=B$ then, since $\unlhd^S$ is reflexive, we have that $A\unlhd^S B$. If $A<_{KY} B$ then, by Proposition \ref{teoordenalphacorte}, $\cortea{A}\leq_{KM} \cortea{B}$ for each $\alpha\in (0,1]$ and therefore, for $m=m(A,B)$, we have that $\corteai{A}{m}<_{KM} \corteai{B}{m}$ and $\corteai{A}{k}=\corteai{B}{k}$ for each $k\leq m$. So, as $\preceq$ is admissible order on $\con{IR}$, $\corteai{A}{m}\prec \corteai{B}{m}$. Thereby,  $A\unlhd^S B$.

\noindent Therefore, $\unlhd_S$ is an admissible order.
\end{proof}

We will denote as $\unlhd_{Lex1}^S$, $\unlhd_{Lex2}^S$, $\unlhd_{XY}^S$ and $\unlhd_{2XY}^S$ the admissible orders on $\NFR$ generated by the admissible orders $\preceq_{Lex1}$, $\preceq_{Lex2}$, $\preceq_{XY}$ and $\preceq_{2XY}$ and an upper dense sequence $S=(\alpha_i)_{i\in\con{Z}^+}$  in $]0,1]$, respectively, according to Theorem \ref{ZBM}.

  \begin{proposicion} \label{pro-leq-WW-leq-XY}
  Let $S=(\alpha_i)_{i\in\con{Z}^+}$ be an upper dense sequence in $]0,1]$. Then 
   $\unlhd_{XY}^S=\leq_{WW}^S$.
  \end{proposicion}

  \begin{proof}
  Let $A,B\in \NFR$. If $A<_{WW}^S B$ then there exists $n_0\in \con{Z}^+$ such that $c_{n_0}(A) < c_{n_0}(B)$ and $c_i(A)=c_i(B)$ for each $i<n_0$. We consider two cases, namely:
 
\ITEM{Case 1.} If $n_0$ is even then taking $m=\frac{n_0}{2}$  we have that $\rcorte{A}{m}-\lcorte{A}{m}< \rcorte{B}{m}-\lcorte{B}{m}$, $\rcorte{A}{m}+\lcorte{A}{m}=\rcorte{B}{m}+\lcorte{B}{m}$,  $\rcorte{A}{m-i}-\lcorte{A}{m-i}=\rcorte{B}{m-i}-\lcorte{B}{m-i}$ and $\rcorte{A}{m-i}+\lcorte{A}{m-i}=\rcorte{B}{m-i}+\lcorte{B}{m-i}$  for each $i< m$. Hence, $\rcorte{A}{m}-\lcorte{A}{m}< \rcorte{B}{m}-\lcorte{B}{m}$, ${\rcorte{A}{m}}+{\lcorte{A}{m}}={\rcorte{B}{m}}+{\lcorte{B}{m}}$, ${\rcorte{A}{i}}+{\lcorte{A}{i}}= {\rcorte{B}{i}}+{\lcorte{B}{i}}$ and ${\rcorte{A}{i}}-{\lcorte{A}{i}}= {\rcorte{B}{i}}-{\lcorte{B}{i}}$ for all $i<m$.

\ITEM{Case 2.} If $n_0$ is odd then taking $m=\frac{n_0+1}{2}$  we have that $r^*_A(\alpha_{m})+l^*_A(\alpha_{m}) < r^*_B(\alpha_{m})+l^*_B(\alpha_{m})$,  $r^*_A(\alpha_{m-i})-l^*_A(\alpha_{m-i})= r^*_B(\alpha_{m-i})-l^*_B(\alpha_{m-i})$ and $r^*_A(\alpha_{m-i})+l^*_A(\alpha_{m-i})= r^*_B(\alpha_{m-i})+l^*_B(\alpha_{m-i})$  for each $i< m$. Hence, ${\rcorte{A}{m}}-{\lcorte{A}{m}}< {\rcorte{B}{m}}-{\lcorte{B}{m}}$, ${\rcorte{A}{i}}+{\lcorte{A}{i}}= {\rcorte{B}{i}}+{\lcorte{B}{i}}$ and ${\rcorte{A}{i}}-{\lcorte{A}{i}}= {\rcorte{B}{i}}-{\lcorte{B}{i}}$ for all $i<m$. Therefore, in both cases, $\corteai{A}{m}\prec_{XY} \corteai{B}{m}$ and $\corteai{A}{i}= \corteai{B}{i}$ for each $i< m$. Since, clearly $m=m(A,B)$ it follows that $A\lhd_{XY}^S B$.
   
Reciprocally, if $A \lhd_{XY}^S B$ then  $\corteai{A}{m}\prec_{XY} \corteai{B}{m}$ where $m=m(A,B)$. So, either ${\lcorte{A}{m}}+{\rcorte{A}{m}}<{\lcorte{B}{m}}+{\rcorte{B}{m}}$ or ${\lcorte{A}{m}}+{\rcorte{A}{m}}={\lcorte{B}{m}}+{\rcorte{B}{m}}$ and ${\rcorte{A}{m}}-{\lcorte{A}{m}}<{\rcorte{B}{m}}+{\lcorte{B}{m}}$. In the first case, take $n_0=2m-1$ and in the second case $n_0=2m$. In any of the cases we have that $c_{n_0}(A)<c_{n_0}(B)$ and since for each $i<m$ we have that $\corteai{A}{i}= \corteai{B}{i}$ then $c_{i}(A)= c_{i}(B)$. Thereby, $A\leq_{WW}^S B$.
  \end{proof}
  
\begin{corolario}
For all upper dense sequence  $S=(\alpha_i)_{i\in\con{Z}^+}$ in $]0,1]$, the relation $\leq_{WW}^S$ is an admissible order on $\NFR$.
\end{corolario}
\begin{proof}
 Straightforward from Theorem \ref{ZBM} and Proposition \ref{pro-leq-WW-leq-XY}.
\end{proof}

 \begin{definicion}\label{positivonegativo}
  Let $\preceq$ be an admissible order on $\NFR$.
   A fuzzy number $A$ is $\preceq$-positive if $\crispy{0}\prec A$, is $\preceq$-negative if $A \prec \crispy{0}$, is non $\preceq$-negative if~ $\crispy{0}\preceq A$ and is non $\preceq$-positive if $A\preceq \crispy{0}$.
  \end{definicion}

  \begin{observacion}
Clearly, each fuzzy number $A$ is either $\preceq$-positive, $\preceq$-negative or $A=\crispy{0}$. Nevetheless, some fuzzy number are $\preceq_1$-positives for an admissible order $\preceq_1$ but are $\preceq_2$-negative for an admissible order $\preceq_2$.   For example, the fuzzy triangle number $(-1,0,2)$ is $\unlhd_{Lex1}^S$-negative and $\unlhd_{Lex2}^S$-positive for any  upper dense sequence $S=(\alpha_i)_{i\in\con{Z}^+}$  in $]0,1]$.
  
  \end{observacion}

\begin{observacion}
From Definition \ref{positivonegativo} and \cite[Pag. 104, point 5]{Klir1995}, if $B$ and $C$ are $\preceq$-positive then $A(B+C)=AB+AC$ for all $A\in\NFR$.
\end{observacion}

\begin{proposicion}\label{pro-preceq-positive-[0]}
 Let $A\in \NFR$. Then  $A$ is $\preceq$-positive for each admissible order $\preceq$ in $\NFR$ if and only if $A\neq \crispy{0}$ and $0\leq supp(A)^- $.
\end{proposicion}
\begin{proof}

Firstly, we assume $A=\crispy{0}$ then, trivially, $A$ is non $\preceq$-positive for each admissible order $\preceq$. If  $0> supp(A)^-$ then $[supp(A)^-,supp(A)^+]\prec_{Lex1} [0]$ and therefore, for any upper dense sequence $S=(\alpha_i)_{i\in\con{Z}}$  in $]0,1]$,  $\corteai{A}{m}\prec_{Lex1} [0]=\corteai{\crispy{0}}{m}$ for $m=m(\crispy{0},A)=\min\{i\in \con{Z}^+: \corteai{\crispy{0}}{i}\neq \corteai{A}{i}\}$. So, $A\lhd_{Lex1}^S\crispy{0}$ and therefore $A$ is not $\unlhd_{Lex1}^S$-positive. Thus,  this side of the proposition holds by contraposition.

On the other hand, if $A\neq \crispy{0}$ and $0\leq supp(A)^- $, then $\cortea{\crispy{0}} =[0]\leq_{KM} \cortea{A}$ and  $[0] <_{KM} \corteai{A}{m}$ for $m=m(\crispy{0},A)$. So, $\crispy{0}<_{KY} A$ and therefore for any admissible order $\preceq$ we have that $\crispy{0}\prec A$, that is  $A$ is $\preceq$-positive.
\end{proof}

\begin{proposicion}\label{pro-preceq-negative-[0]}
 Let $A\in \NFR$. Then  $A$ is $\preceq$-negative for each admissible order $\preceq$ in $\NFR$ if and only if $A\neq \crispy{0}$ and $supp(A)^+ < 0$.
\end{proposicion}
\begin{proof}
Using similar steps to Proposition \ref{pro-preceq-positive-[0]}, we obtain the result.
\end{proof}

\begin{corolario}
Let $A\in \NFR$ be such that $supp(A)^- <0<supp(A)^+$. Then there exist admissible orders $\preceq_1$ and $\preceq_2$ such that $A$ is $\preceq_1$-positive and $A$ is $\preceq_2$-negative
\end{corolario}

\begin{teorema}\label{teo-ao-positive}
 Let $A\in \NFR$ and $S=(\alpha_i)_{i\in\con{Z}^+}$ be an upper dense sequence in $]0,1]$. Then
 \begin{enumerate}
\item $A$ is $\unlhd_{Lex1}^S$-positive if and only if $0\leq \lcorte{A}{m}$ and $A \neq{\crispy{0}}$,
 
  \item $A$ is $\unlhd_{Lex2}^S$-positive  if and only if $ 0<{\rcorte{A}{m}}$,

  \item $A$ is $\unlhd_{XY}^S$-positive if and only if  $-{\lcorte{A}{m}} \leq \rcorte{A}{m}$ and $A\neq {\crispy{0}}$,
  
  \item $A$ is $\unlhd_{2XY}^S$-positive if and only if $A\neq {\crispy{0}}$  and  $-\lcorte{A}{m}\leq  3\rcorte{A}{m}$,
 \end{enumerate}
with $m=m(\crispy{0},A)$.
\end{teorema}

\begin{proof} Let $A\in \NFR$ and $m=m(\crispy{0},A)=\min\{i\in \con{Z}^+: \corteai{\crispy{0}}{i}\neq \corteai{A}{i}\}=\min\{i\in \con{Z}^+: [0]\neq \corteai{A}{i}\}$, we have:
 \begin{enumerate}
  \item[1)]  $A$ is $\unlhd_{Lex1}^S$-positive  if and only if  $\crispy{0}\lhd_{Lex1}^S A$ if and only if   $\corteai{\crispy{0}}{m}=[0]\prec_{Lex1} \corteai{A}{m}$  if and only if $0< {\lcorte{A}{m}}$ or, ${\lcorte{A}{m}}=0$ and $0< {\rcorte{A}{m}}$  if and only if $0\leq {\lcorte{A}{m}}$ and $A \neq \crispy{0}$.   
   
  \item[3)] $A$ is $\unlhd_{XY}^S$-positive  if and only if  $\corteai{\crispy{0}}{m}=[0]\prec_{XY} \corteai{A}{m}$ if and only if    
  $0< {\lcorte{A}{m}}+{\rcorte{A}{m}}$ or,  ${\lcorte{A}{m}}+{\rcorte{A}{m}}=0$ and  $0< {\rcorte{A}{m}}-{\lcorte{A}{m}}$   
if and only if $-{\lcorte{A}{m}} < {\rcorte{A}{m}}$ or,  $-{\lcorte{A}{m}}={\rcorte{A}{m}}$ and  ${\lcorte{A}{m}} <{\rcorte{A}{m}}$ 
if and only if $-{\lcorte{A}{m}} \leq {\rcorte{A}{m}}$ and $A\neq \crispy{0}$.
 \end{enumerate}
 The proof for {2 and 4} is analagous.
 \end{proof}

\begin{corolario}  Let $A\in \NFR$ and $S=(\alpha_i)_{i\in\con{Z}^+}$ be an upper dense sequence in $]0,1]$. If $A\neq\crispy{0}$ then  \begin{enumerate}   \item $A$ is $\unlhd_{Lex1}^S$-positive  if and only if $0\leq {\lcorte{A}{m}}$,   \item $A$ is $\unlhd_{Lex2}^S$-positive  if and only if $ 0<{\rcorte{A}{m}}$,   \item $A$ is $\unlhd_{XY}^S$-positive if and only if  $-{\lcorte{A}{m}} \leq {\rcorte{A}{m}}$,   \item $A$ is $\unlhd_{2XY}^S$-positive if and only if $-\lcorte{A}{m}\leq 3{\rcorte{A}{m}}$,  \end{enumerate} with $m=m(\crispy{0},A)$. \end{corolario}

A natural property of the arithmetic of real numbers is that the sum of two positive numbers is always positive and the sum of two negative numbers result is also a negative number. However, this natural property does not work for each admissible order considering the addition of fuzzy numbers given in Preliminary section.
Indeed, take as $S=(\alpha_i)_{i\in\con{Z}^+}$ be an upper dense sequence in $]0,1]$ such that $\alpha_1=0.8$ (By item 3 of Remark \ref{obs-S} such $S$ there exists), and consider the triangular fuzzy numbers $A=(-3,1,2)$ and  $B=(-4,0,2)$. Then,
${\corteai{A}{1}}=[0.2,1.2]$, $\corteai{B}{1}=[-0.8,0.4]$ and therefore both are $\unlhd_{2XY}^S$-positive. However, $\corteai{A+B}{1}=[-0.6,1.6]$ and hence $A+B$ is $\unlhd_{2XY}^S$-negative.

So, the next result analyses which of the other three admissible orders verifies this natural property.

\begin{proposicion}
 Let $S=(\alpha_i)_{i\in\con{Z}^+}$ be an upper dense sequence in $]0,1]$ and $A,B\in \NFR$. Then
 \begin{enumerate}
 \item If $A$ and $B$ are $\unlhd_{Lex1}^S$-positive then $A+B$ is also $\unlhd_{Lex1}^S$-positive;
 \item If $A$ and $B$ are $\unlhd_{Lex1}^S$-negative then $A+B$ is also $\unlhd_{Lex1}^S$-negative;
 \item If $A$ and $B$ are $\unlhd_{Lex2}^S$-positive then $A+B$ is also $\unlhd_{Lex2}^S$-positive;
 \item If $A$ and $B$ are $\unlhd_{Lex2}^S$-negative then $A+B$ is also $\unlhd_{Lex2}^S$-negative;
 \item If $A$ and $B$ are $\unlhd_{XY}^S$-positive then $A+B$ is also $\unlhd_{XY}^S$-positive;
  \item If $A$ and $B$ are $\unlhd_{XY}^S$-negative then $A+B$ is also $\unlhd_{XY}^S$-negative.
  \end{enumerate}
\end{proposicion}
\begin{proof} Let $A,B\in \NFR$ and $m_A=m(\crispy{0},A)$, $m_B=m(\crispy{0},B)$. From Theorem \ref{teo-ao-positive} {we have:}
\begin{enumerate}
\item[1.] If  $A$ and $B$ are $\unlhd_{Lex1}^S$-positive then {by Theorem \ref{teo-ao-positive}-(1) we have that $A\neq \crispy{0}$, $B\neq \crispy{0}$, $0\leq \lcorte{A}{m_A}$ and $0\leq \lcorte{B}{m_B}$}. 

Without loss of generality we can suppose that $m_A\leq m_B$. Then, for each {$i< m_A$}, by Eq. \eqref{eq-alpha-cuts-A*B} we have that $\corteai{A+B}{i}=\corteai{A}{i}+\corteai{B}{i}=[0]+[0]=[0]$ and, since {$0\leq \lcorte{B}{m_B}$}, then {$0\leq \lcorte{A}{m_A} \leq \lcorte{A}{m_A}+\lcorte{B}{m_A}
  =\lcorte{A+B}{m_A}$.} Therefore, {$m_A=m(A+B,\crispy{0})$} and  thereby $\crispy{0}\lhd_{Lex1}^S A+B$.

\item[{3.}] If  $A$ and $B$ are $\unlhd_{Lex2}^S$-positive then {$0 < \rcorte{A}{m_A}$} and {$0 < \rcorte{B}{m_B}$.}
Without loss of generality we can suppose that $m_A\leq m_B$. Then, for each $i<m_A$, by Eq. \eqref{eq-alpha-cuts-A*B} we have that $\corteai{A+B}{i}=\corteai{A}{i}+\corteai{B}{i}=[0]+[0]=[0]$ and since {$\rcorte{A+B}{m_A}=\rcorte{A}{m_A}+\rcorte{B}{m_A} > 0$} then $m_A=m(A+B,\crispy{0})$ and therefore, $\crispy{0}\lhd_{Lex2}^S A+B$.

\item[{5.}] If  $A$ and $B$ are $\unlhd_{XY}^S$-positive then ${-\lcorte{A}{m_A} < \rcorte{A}{m_A}}$ and $-\lcorte{B}{m_B} < \rcorte{B}{m_B}$. 
Without loss of generality we can suppose that $m_A\leq m_B$. Then, for each $i<m_A$, by Eq. \eqref{eq-alpha-cuts-A*B} we have that $\corteai{A+B}{i}=\corteai{A}{i}+\corteai{B}{i}=[0]+[0]=[0]$ and since $\lcorte{A+B}{m_A}=\lcorte{A}{m_A}+\lcorte{B}{m_A}$  and $\rcorte{A+B}{m_A}=\rcorte{A}{m_A}+\rcorte{B}{m_A}$ then $\lcorte{A+B}{m_A}< \rcorte{A+B}{m_A}$. So,
$m_A=m(A+B,\crispy{0})$ and therefore, $\crispy{0}\lhd_{XY}^S A+B$.
\end{enumerate}

The proof for {2, 4 and 6} is analagous.
\end{proof}

\section{Ranking Path Costs in Fuzzy Weighted Graphs}

Weighted graphs arise from the necessity to model practical problems where the edges in a graph have an associated  cost as, for example, the well-known problem of the {traveling} salesman. This problem consists of going through a list of towns, visiting each one exactly once and featurning to the origin and in such a way that the travelling total time (the cost) is minimized \cite{Bondy2008}. When we consider that such cost is imprecise, as the time dispensed in the travel of car from a city $X$ to a city $Y$, the use of $\NFR$ to model the costs is more appropriate. 

\begin{definicion} \cite{Cornelis2004}
A fuzzy weighted graph is a triple  $G=\prt{V,E,c}$ where $V$ is a set whose elements are called vertices, $E\subseteq V\times V$ is a set of edges and $c:E\rightarrow \NFR$ is the cost (or weight) function. Given $v,u\in V$, a $(v,u)$-path  in $G$ is a finite and non empty sequence of edges  $p=(e_1,\ldots,e_n)=((v_1,u_1),\ldots,(v_n,u_n))$ such that $v_i=u_{i-1}$ for each $i=2,\ldots,n$, $v_1=v$ and $u_n=u$. A $(v,u)$-path is a cycle if $v=u$.
\end{definicion}

In order to simplify the notation, we will denote the $(v_1,u_n)$-path $p=((v_1,u_1),(v_2,u_2),\ldots,(v_n,u_n))$ by $p=(v_1,\ldots,v_n,u_n)$.

A fuzzy weighted graph  $G=\prt{V,E,c}$ such that $E$ is symmetric, i.e. $(v,u)\in E$ if and only if $(u,v)\in E$, and $c(v,u)=c(u,v)$ for each $(v,u)\in E$ will be called of undirected fuzzy weighted graph.

\begin{ejemplo}
\label{ejem-FWG}
The Figure \ref{graph1} is an example of a directed and of an undirected fuzzy weighted graph with triangular fuzzy numbers as cost.
\end{ejemplo}
\begin{figure}
\footnotesize
\centering
  \subfloat[]{
   \label{diagramaa}
    \includegraphics[width=0.3\textwidth]{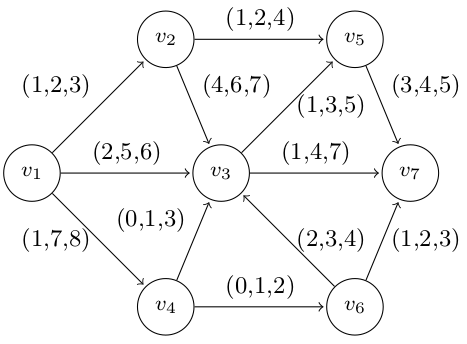}}\\
      \subfloat[]{
   \label{diagramab}
 \hspace{0.8cm}   \includegraphics[width=0.3\textwidth]{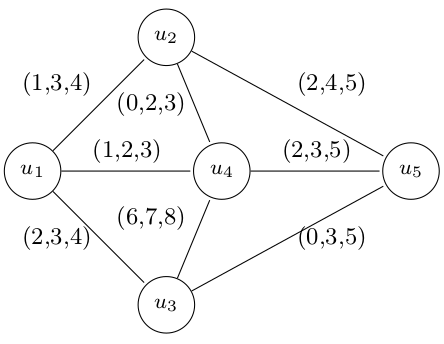}}
           \label{f:numeros}
\caption{Examples of directed and undirected fuzzy weighted graphs.}
\label{graph1}
\end{figure}
The cost of a $(v,u)$-path $p=(e_1,\ldots,e_n)$, denoted by $c(p)$ is given by the addition of the cost of each edge in the path, i.e. $c(p)=\sum\limits_{i=1}^n c(e_i)$. Given a pair of vertices $(v,u)$ in  a fuzzy weighted graph $G$ there may exist several or no $(v,u)$-path. Given an admissible order $\preceq$ on $\NFR$, a fuzzy weighted graph  $G=\prt{V,E,c}$, and $v,u\in V$, we say that a $(v,u)$-path $p$ is $\preceq$-minimal if $c(p)\preceq c(q)$ for each $(v,u)$-path $q$ in 
$G$.

\begin{ejemplo}
In the case of the (directed) fuzzy weighted graph in Figure \ref{graph1}, the Table \ref{tab-ej} presents all the possible  paths from $v_1$ to $v_7$ and their costs.
Given an  upper dense sequence  $S=(\alpha_i)_{i\in\con{Z}^+}$ in $]0,1]$ such that $\alpha_1=0.8$ we have the following ranking with respect to the orders:

\ITEM For $\unlhd_{Lex1}^S$:
\begin{multline*}
p_1\lhd_{Lex1}^S p_4  \lhd_{Lex1}^S p_6 \unlhd_{Lex1}^S p_7 \lhd_{Lex1}^S p_5\\
\lhd_{Lex1}^S p_3 \lhd_{Lex1}^S p_2 \lhd_{Lex1}^S p_9 \lhd_{Lex1}^S p_8 \lhd_{Lex1}^S p_{10}.
\end{multline*}
\ITEM For $\unlhd_{Lex2}^S$:
\begin{multline*}
p_1\lhd_{Lex2}^S p_4  \lhd_{Lex2}^S p_6 \lhd_{Lex2}^S p_5 \lhd_{Lex2}^S p_7\\
 \lhd_{Lex2}^S p_3 \lhd_{Lex2}^S p_2 \lhd_{Lex2}^S p_9 \lhd_{Lex2}^S p_8 \lhd_{Lex2}^S p_{10}.
\end{multline*}
\ITEM For $\unlhd_{XY}^S$:
\begin{multline*}
p_1\lhd_{XY}^S p_4  \lhd_{XY}^S p_6 \lhd_{XY}^S p_7 \lhd_{XY}^S p_5\\
\lhd_{XY}^S p_3 \lhd_{XY}^S p_2 \lhd_{XY}^S p_9 \lhd_{XY}^S p_8 \lhd_{XY}^S p_{10}.
\end{multline*}

Observe that ranking based on $\unlhd_{Lex1}^S$ and $\unlhd_{XY}^S$ is the same and differs from the given by $\unlhd_{Lex2}^S$ in the positions of $p_5$ and $p_7$. So,  the minimal path from $V_1$ to $V_7$, with respect to all the three admissible orders,  is $p_1$.

\begin{table}[h!]
\begin{center}
\caption{Paths of the fuzzy weighted graph in Figure \ref{diagramaa}.}
\label{tab-ej}
\begin{tabular}{c|c|c|c}
\textbf{Identification} & \textbf{Paths} & \textbf{Cost} & \textbf{$\alpha_1$-cut} \\
\hline
$p_1$ & $(v_1,v_2,v_5,v_7)$ & $(5,8,12)$ & $[7.4,8.8]$ \\
$p_2$ & $(v_1,v_2,v_3,v_5,v_7)$ & $(9,15,20)$ & $[12.2,16]$ \\
$p_3$ & $(v_1,v_2,v_3,v_7)$ & $(6,12,17)$ & $[10.8,16]$ \\
$p_4$ & $(v_1,v_3,v_7)$ &  $(3,9,13)$ & $[7.8,9.8]$ \\
      $p_5$ & $(v_1,v_3,v_5,v_7)$ & $(6,12,16)$ & $[10.8,12.8]$ \\
      $p_6$ & $(v_1,v_4,v_6,v_7)$ & $(2,10,13)$ & $[8.4,10.6]$ \\
      $p_7$ & $(v_1,v_4,v_3,v_7)$ & $(2,12,18)$ & $[10,13.2]$ \\
      $p_8$ & $(v_1,v_4,v_3,v_5,v_7)$ & $(5,15,21)$ & $[13,16.2]$ \\
      $p_9$ & $(v_1,v_4,v_6,v_3,v_7)$ & $(4,15,21)$ & $[12.8,16.2]$\\
      $p_{10}$ & $(v_1,v_4,v_6,v_3,v_5,v_7)$ & $(7,18,24)$ & $[15.8,19.2]$ \\ \hline
    \end{tabular}
  \end{center}
\end{table}
We note, according to a Figure \ref{diagramaa}, that we obtain
$$W=\tiny{\left(\begin{array}{ccccccc}
0 &(1,2,3)&(2,5,6)&(1,7,8)&\infty &\infty &\infty\\
\infty & 0 & (2,6,7)&\infty & (1,2,4) & \infty  & \infty\\ \infty & \infty & 0 & \infty & (1,3,5) & \infty & (1,4,7)\\ \infty & \infty & (0,1,3)& 0 &\infty & (0,1,2)&\infty\\ \infty & \infty &\infty & \infty & 0 & \infty & (3,4,7)\\ \infty & \infty & (2,3,4) & \infty & \infty & 0 & (1,2,3)\\ \infty & \infty &\infty & \infty &\infty & \infty &\infty
\end{array}\right)}$$
where $\infty$ denotes that there is no path and $0$ is the null distance. 
\end{ejemplo}

In the following we present an algorithm to determine the $\preceq$-minimal $(v,u)$-path in a fuzzy weighted graph  $G=\prt{V,E,c}$ which is based on an adaptation of Floyd-Warshall algorithm (see \cite{RFloyd,cormen}) 

\begin{algorithm}[H]\small
\algorithmicrequire{ A fuzzy wegthed graph $G=\prt{V,E,c}$ and $v_s,v_f\in V$ with $V=\{v_1,....,v_n\}$}\\
\algorithmicensure{ Solution alternative: vector $p=min-path(v_s,v_f)$ and $d_{s,f}^{(n)}$}\\
\caption{Minimal path between two nodes in a fuzzy weighted graph}
\label{Algorithm M}
\begin{algorithmic}[1]
\STATE $M = \widetilde{1}+\sum_{(v_i,v_j)\in E} c(v_i,v_j) $
\STATE for $i=1$ to $n$
\STATE ~~~~~for $j=1$ to $n$
\STATE ~~~~~~~  if $i=j$ 
\STATE ~~~~~~~  then $d^{(0)}_{ij}= \widetilde{0}$ 
\STATE ~~~~~~~~~~ $ \pi^{(0)}_{ij}=NIL$
\STATE ~~~~~~~  else if $(v_i,v_j)\in E$
\STATE ~~~~~~~~~~ then $d^{(0)}_{ij}=c(v_i,v_j)$
\STATE ~~~~~~~~~~~~~   $\pi^{(0)}_{ij}=i$
\STATE ~~~~~~~~~~  else $d^{(0)}_{ij}=M$
\STATE ~~~~~~~~~~~~~   $\pi^{(0)}_{ij}=NIL$
\STATE for $k=1$ to $n$
\STATE ~~~~~ for $i=1$ to $n$
\STATE ~~~~~~~~~ for $j=1$ to $n$
\STATE ~~~~~~~~~~~~~ If $d^{(k-1)}_{ij}\preceq d^{(k-1)}_{ik}+d^{(k-1)}_{kj}$
\STATE ~~~~~~~~~~~~~~~~ then $d^{(k)}_{ij}= d^{(k-1)}_{ij}$
\STATE ~~~~~$\pi^{(k)}_{ij}=\pi^{(k-1)}_{ij}$
\STATE ~~~~~~~~~~ else $d^{(k)}_{ij}= d^{(k-1)}_{ik}+d^{(k-1)}_{kj}$
\STATE ~~~~~~~~~~~~~ $\pi^{(k)}_{ij}=\pi^{(k-1)}_{kj}$
\STATE    $k=1$
\STATE    $m=2$
\STATE    Repeat
\STATE    ~~~ While $\pi_{p(k) p(k+1)}^{(n)}\neq p(k)$
\STATE    ~~~~~~~ for $i=k+1$ to $m$ 
\STATE    ~~~~~~~~~~~ $p(m-i+k+2)=p(m-i+k+1)$
\STATE    ~~~~~~~ $p(k+1)=\pi_{p(k) p(k+1)}^{(n)}$
\STATE    ~~~~~~~ $m=m+1$
\STATE    ~~~ $k=k+1$
\STATE    Until $k=m$
\STATE Return $p$, $d^{(n)}_{sf}$

\end{algorithmic}
\end{algorithm}

\section{Illustrative Example}
Consider the road distances between the neighbor capitals of the 9 Brazilian Northeast States obtained from the sites 
\begin{enumerate}
\item \url{http://www.distanciasentrecidades.com/},
\item \url{https://www.rotamapas.com.br/},
\item \url{https:// www.melhoresrotas.com/s/distancia-entre-cidades},
\item \url{http://rotasbrasil.com.br},
\end{enumerate}
From such site we observed, for example, that the distance between the cities of Natal and Jo\~ao Pessoa vary (178 km, 179 km, 181 km, 182 km, 189 km, 214 km). From this date we generate the triangular fuzzy number $(178, 181.5, 214)$ by taking the minimum, median and maximum of such distances, i.e., $(\min,$ $\Me,$ $\max)$. {In the following Tables and Figure \ref{graph_capitales_nordeste} the abbreviations of the cities will be used, i.e., S\~ao Luis by SL, Teresina by T, Fortaleza by F and so on. There are 362,880 possible routes for the travelling salesman problem.}

   \begin{table*}[h!]
\begin{center}
\begin{tiny}
\begin{tabular}{c|c|c|c|c|c|c|c|c|c}
Km & SL & T & F& N & JP & R & M & A & S\\
\hline 
 &  & $(433,$ & $(878,$ & $(1395,$ & $(1549,$ & $(1557,$& $(1552,$ & $(1563,$ & $(1627,$\\
SL &  & $439,$ &$1169.1,$ & $1408,$ & $1556,$ & $1664,$& $1563,$ & $1584,$ & $1658,$\\
  &  & $610.8)$ & $1177.9)$ & $1753.9)$ & $1825.6)$ & $1738.4)$& $1733.3)$ & $1819)$ & $1834.7)$\\
\hline 
  & $(432,$ &  & $(592,$ & $(1040,$ & $(1156,$ &  $(1127,$ & $(1125.7,$ & $(1133,$ & $(1196,$\\
T  & $435,$ &  & $593.5,$ & $1097.6,$ & $1160.8,$ &  $1127.5,$  & $1126.5,$ & $1143.15,$ & $1226.55,$\\
  & $609.1)$ &  & $594.4)$ & $1146.2)$ & $1251.5)$ & $1130.7)$ & $1139)$ & $1159)$ & $1233)$\\
\hline 
 & $(879,$ & $(592,$ && $(511,$ & $(665,$ & $(771.9,$ &  $(957.4,$ & $(1089.9,$ & $(1190,$\\
F & $900,$ & $593.9,$ && $515,$ & $668,$ & $773,$
&  $1027,$ & $1119,$ & $1203.25,$\\
 & $1179)$ & $594)$ && $531.8)$ & $698.3)$ & $798.5)$
 &  $1050.5)$ & $1130)$ & $1205)$\\
\hline 
 & $(1396,$ & $(1041,$ & $(511,$ &  & $(178.8,$ & $(285,$
  & $(535,$ & $(782,$&  $(1093,$ \\
N & $1418,$ & $1107,$ & $ 520,$&  & $180,$ & $285.1,$
 & $539.85,$ & $788.5,$& $1095,$  \\
 & $1756.9)$ & $1149)$ & $530)$ &  & $182)$ & $286)$
  & $543)$ & $794.8)$&  $1101.5)$  \\
\hline 
& $(1550,$ & $(1155,$ & $(665,$ & $(175.9,$ &  & $(116,$ & $(366,$ & $(613,$ &  $(924,$ \\
 JP & $1571,$ & $1158.8,$ &  $671.05,$ & $180,$ &  & $116.6,$ & $370.85,$ & $620.5,$ &  $927,$ \\
 & $1831.4)$ & $1257)$ &  $684)$& $182)$ &  & $117)$ & $375)$ & $625.8)$ &  $932.4)$ \\
\hline 
& $(1557,$ & $(1126,$ & $(773,$& $(285.8,$ & $(116,$ & & $(154.7,$& $(497,$ & $(808,$ \\
R& $1673.5,$ & $1127.5,$ & $781,$& $286,$ & $118,$ &
& $253,$& $504.9,$ & $811,$  \\
& $1735.6)$ & $1127.7)$ & $803.2)$& $290)$ & $119.2)$ & 
& $258)$& $510)$ & $816.4)$ \\
\hline 
& $(1552,$ & $(1124,$ & $(964,$ & $(537,$ & $(367,$ & $(251,$ &  & $(271,$ & $(581,$ \\
M& $1561.5,$ & $1126.2,$ & $1028,$ & $539.05,$ & $371.25,$ & $253.8,$ &  & $279.25,$ & $588.1,$\\
& $1733.2)$ & $1137)$ & $1051.2)$ & $543)$ & $373)$ & $258)$
&  & $291)$ & $593)$ \\
\hline 
 & $(1561,$ & $(1131,$ & $(1095,$ & $(782,$ & $(613,$ & $(497,$ & $(270,$ &  & $(325,$\\
A & $1583,$ & $1142.05,$ & $1119.5,$ & $783.4,$ & $615.1,$ & $497.65,$ & $271,$ &  & $325.95,$\\
 & $1821.7)$ & $1158)$ & $1130)$ & $794)$ & $624)$ & $508)$  & $289)$ &  &  $338)$\\
\hline 
& $(1626,$ & $(1196,$ & $(1188,$ & $(1091,$ & $(922,$ & $(805.6,$ & $(580,$ & $(321.8,$ &  \\
S& $1655,$ & $1222.75,$ & $1200.4,$& $1093.55,$ & $924.75,$ & $807.5,$  & $581,$ & $326,$ & \\
& $1831.3)$ & $1232)$ & $1204)$& $1096)$ & $926)$ & $810)$  & $591)$ & $349)$ &  \\
\hline
Km & SL & T & F& N & JP & R & M & A & S
\end{tabular}\end{tiny}
\caption{Triangular fuzzy numbers defined by $(\min,\Me,\max)$ for distances between capitals in the Brazilian Northeast 
.}
\label{tab-numeros-capitales1}
  \end{center}
   \end{table*} 

\begin{figure}[h]
\centering
\begin{tikzpicture}
\node[draw,circle] (SL) at (0,7) {SL};
\node[draw,circle] (T) at (1.5,5) {T};
\node[draw,circle] (F) at (4.37777777333,6.11111111107) {F};
\node[draw,circle] (N) at (6.82222222222,5
) {N};
\node[draw,circle] (JP) at (7.066666667,4.024444444) {JP};
\node[draw,circle] (R) at (7,3
) {R};
\node[draw,circle] (M) at (6.222222222222222,2.222222222222) {M};
\node[draw,circle] (A) at (5.17777777777778,1.4666666666666667) {A};
\node[draw,circle] (S) at (4.155555555555,0) {S};
\draw[<->] (SL) to[bend left=0]
(T);
\draw[<->] (SL) to[bend left] 
(F);
(S);
\draw[<->] (T) to[bend left]
 (F);
\draw[<->] (T) to[bend left=15]
(JP);
\draw[<->] (T) to[bend left=-5]
(R);
\draw[<->] (T) to[bend left=-5]
(M);
\draw[<->] (T) to[bend right=10]
 (A);
\draw[<->] (T) to[bend right=10]
(S);
\draw[<->] (F) to[bend left]
(N);
\draw[<->] (N) to[bend left]
(JP);
\draw[<->] (JP) to[bend left]
(R);
\draw[<->] (R) to[bend left]
(M);
\draw[<->] (M) to[bend left]
(A);
\draw[<->] (A) to[bend left]
(S);
\end{tikzpicture}
\caption{Examples of directed weighted graphs.}
\label{graph_capitales_nordeste}
\end{figure}

From the Table \ref{tab-numeros-capitales1} we observe that the distance from Fortaleza to Jo\~ao Pessoa is different from Fortaleza to Natal and from Natal to Jo\~ao Pessoa, we have: 
\begin{align*}
(665,668,698.3)&\neq (511,515,531.8)+(178.8,180,182)\\
&=(689.8,695,713.5),
\end{align*}
from where $(665,668,698.3)<_{KY}(689.8,695,713.5)$. Then by definition of admissible order $(665,668,698.3)\prec(689.8,695,713.5)$.

In the case of the (directed) fuzzy weighted graph in Figure \ref{graph_capitales_nordeste},  the Table \ref{tab-cidades} presents some routes chosen possibilities among the 362,880 randomly and their cost from any capital city of the Brazilian northeast until returning to it, having passed through all the other capitals.
\begin{table*}[h!]
\begin{center}
\begin{tabular}{c|c|c|c}
Route & Paths & Cost & $\alpha$-cut\\
\hline 
$r_1$ &\tamano{6}{
SL,JP,R,S,M,F,A,T,N,SL} & \tamano{6}{$(8673.9,8869.25,9592.3)$} & \tamano{6}{$[195.35\alpha+8673.9,9592.3-723.05\alpha]$}\\
\hline 
$r_2$ & \tamano{6}{N,T,SL,F,A,R,JP,S,M,N} & \tamano{6}{$(6094.9,6492.84,6759.6)$} & \tamano{6}{$[397.94\alpha +6094.9,6759.6-266.76\alpha]$}\\
\hline 
$r_3$ & \tamano{6}{N,F,JP,T,A,R,S,SL,M,N} & \tamano{6}{$(8484,8653.85,8983.3)$} & \tamano{6}{$[169.85\alpha+8484,8983.3-329.45\alpha]$}\\
\hline
$r_4$ & \tamano{6}{N,JP,A,F,SL,S,M,T,R,N} & \tamano{6}{$(7509.6,7598.7,8100)$} &
\tamano{6}{$[89.1\alpha +7509.6,8100-501.3\alpha]$}\\
\hline
$r_5$& \tamano{6}{JP,SL,A,F,T,R,N,M,S,JP} &\tamano{6}{$(8250.8,8334.6,8857.1)$}  &
\tamano{6}{$[83.8\alpha+8250.8,8857.1-522.5\alpha]$}\\
\hline
$r_6$ & \tamano{6}{JP,T,F,R,A,M,S,SL,N,JP} &\tamano{6}{$(7066.7,7132,7809.1)$} &
\tamano{6}{$[65.3\alpha+7066.7,7809.1-677.1\alpha]$}\\
\hline
$r_7$ & \tamano{6}{JP,N,S,T,M,F,A,R,SL,JP} & \tamano{6}{$(9247.5,9498.4,9904.9)$} &
\tamano{6}{$[250.9\alpha+9247.5,9904.9-406.5\alpha]$} \\
\hline
$r_8$ & \tamano{6}{JP,S,N,A,R,F,SL,T,M,JP} &\tamano{6}{$(6871.7,6924.45,7436.2)$} &
\tamano{6}{$[52.75\alpha +6871.7 ,7436.2-511.75\alpha)]$} \\
\hline
$r_9$ & \tamano{6}{A,JP,T,N,M,SL,S,R,F,A} & \tamano{6}{$(9190.5,9333.35,9881.3)$} & 
\tamano{6}{$[142.85\alpha +9190.5,9881.3-547.95\alpha]$}\\
\hline
$r_{10}$ & \tamano{6}{S,T,M,A,F,JP,SL,R,N,S} & \tamano{6}{$(8838.5,9092,9351.6)$} &
\tamano{6}{$[253.5\alpha+8838.5,9351.6-259.6\alpha]$}\\
 \hline
$r_{11}$ &\tamano{6}{N,R,F,JP,M,SL,T,S,A,N} & \tamano{6}{$(6988.7,7051.05,7638.7)$} &
 \tamano{6}{$[62.35\alpha+6988.7,7638.7-587.65\alpha]$}\\
 \hline
$r_{12}$ &\tamano{6}{A,N,JP,S,SL,R,M,T,F,A} & \tamano{6}{$(8028.4,8301.1,8597.5)$} &
 \tamano{6}{$[272.7\alpha +8028.4,8597.5-296.4\alpha]$}\\
  \hline
$r_{13}$ &\tamano{6}{M,SL,F,S,R,JP,T,A,N,M} & \tamano{6}{$(8681.6,9073.55,9341.3)$} &
 \tamano{6}{$[391.95\alpha+8146.6,9341.3-267.75\alpha]$}\\
 \hline
$r_{14}$ &\tamano{6}{F,R,T,SL,N,JP,S,A,M,F} & \tamano{6}{$(6383.5,6475.5,7092.8)$} &
\tamano{6}{$[92\alpha +6383.5,7092.8-617.3\alpha]$}\\
 \hline
 $r_{15}$ &\tamano{6}{M,F,R,S,JP,T,SL,A,N,M} & \tamano{6}{$(7932.9,8037.8,8612.2)$} &
 \tamano{6}{$[104.9\alpha+7932.9,8612.2-574.4\alpha]$}\\
 \hline
 $r_{16}$ &\tamano{6}{M,A,N,S,JP,SL,F,T,M} & \tamano{6}{$(7213.7,7592.9,7854.8)$} &
 \tamano{6}{$[379.2\alpha +7213.7,7854.8-261.9\alpha]$}\\
 \hline
$r_{17}$ &\tamano{6}{SL,S,R,F,N,JP,A,M,T,SL} & \tamano{6}{$(6334.4,6394.2,6822.6)$} &
\tamano{6}{$[59.8\alpha +6334.4,6822.6-428.4\alpha]$}\\
\hline
$r_{18}$ &\tamano{6}{A,N,M,T,R,SL,F,JP,S,A} & \tamano{6}{$(7918.3,8340.05,8497.9)$} &
\tamano{6}{$[421.75\alpha +7918.3,8497.9-157.85 \alpha]$}\\
 \hline
  $r_{19}$ &\tamano{6}{F,R,N,SL,M,S,JP,T,A,F} & \tamano{6}{$(8028.4,8301.1,8597.5)$} &
  \tamano{6}{$[272.7\alpha + 8028.4,8597.5-296.4\alpha]$}\\
  \hline
$r_{20}$ &\tamano{6}{F,M,S,T,A,SL,JP,R,N,F} & \tamano{6}{$(7890.2,8040.6,8618.8)$} &
\tamano{6}{$[150.4\alpha+7890.2,8618.8-578.2\alpha]$}\\
\end{tabular}
\caption{Paths with their cost of the fuzzy weighted graph in Figure \ref{graph_capitales_nordeste} and $\alpha\in~ ]0,1]$.}
\label{tab-cidades}
  \end{center}
\end{table*}

We note that of the possible routes of the Table \ref{tab-cidades} $r_1$ and $r_{10}$ cannot be compared with the partial order $\leq_{KY}$ since, if $\alpha_0=\frac{938}{2027}$, then $r_1\parallel r_{10}$ because, $\cortea{r_1}\leq_{KM}\cortea{r_{10}}$ for $\alpha\in~\left]\alpha_0,1\right]$ and $\cortea{r_{10}}\subseteq \cortea{r_1}$  for $\alpha\in~\left]0,\alpha_0\right[$.

In Figure \ref{reticompleto} we have ordered the costs in Table \ref{tab-cidades} from the smallest to the greatest, according to the $\le_{KY}$ partial order.
\begin{figure}[h]
\centering
 \begin{tikzpicture}
\node[draw,circle] (r_1) at (1,9) {$r_1$};
\node[draw,circle] (r_7) at (0,11) {$r_7$};
\node[draw,circle] (r_9) at (0,10) {$r_9$};
\node[draw,circle] (r_{10}) at (-2,9.5) {$r_{10}$};
\node[draw,circle] (r_{13}) at (-1,9) {$r_{13}$};
\node[draw,circle] (r_{19}) at (2,9.5) {$r_{19}$};
\node[draw,circle] (r_3) at (0,8) {$r_3$};
\node[draw,circle] (r_5) at (-1.5,7) {$r_5$};
\node[draw,circle] (r_{12}) at (1.5,6.5) {$r_{12}$};
\node[draw,circle] (r_{15}) at (0.5,6.5) {$r_{15}$};
\node[draw,circle] (r_{18}) at (-0.5,6.5) {$r_{18}$};
\node[draw,circle] (r_{20}) at (-1.5,6) {$r_{20}$};
\node[draw,circle] (r_4) at (0,5) {$r_4$};
\node[draw,circle] (r_2) at (-1,3.5) {$r_2$};
\node[draw,circle] (r_8) at (0,4) {$r_8$};
\node[draw,circle] (r_{14}) at (1,3.5) {$r_{14}$};
\node[draw,circle] (r_{17}) at (1,2.5) {$r_{17}$};
\draw[-] (r_7) to (r_9);
\draw[-] (r_9) to (r_{19});
\draw[-] (r_9) to (r_{10});
\draw[-] (r_{10}) to (r_{13});
\draw[-] (r_{19}) to (r_1);
\draw[-] (r_{13}) to (r_3);
\draw[-] (r_1) to (r_3);
\draw[-] (r_3) to (r_5);
\draw[-] (r_3) to (r_{18});
\draw[-] (r_3) to (r_{15});
\draw[-] (r_3) to (r_{12});
\draw[-] (r_5) to (r_{20});
\draw[-] (r_{20}) to (r_4);
\draw[-] (r_{18}) to (r_4);
\draw[-] (r_{15}) to (r_4);
\draw[-] (r_{12}) to (r_4);
\draw[-] (r_4) to (r_8);
\draw[-] (r_8) to (r_2);
\draw[-] (r_8) to (r_{14});
\draw[-] (r_{14}) to (r_{17});

\end{tikzpicture}
\caption{
In Table \ref{tab-cidades}, we have $r_4\leq_{KY}r_{16}\leq_{KY}r_6\leq_{KY}r_{11}\leq_{KY}r_8$.}
   \label{reticompleto}
\end{figure}

We note that the routes $r_1$, $r_2$, $r_5$, $r_{10}$, $r_{12}$, $r_{13}$, $r_{14}$, $r_{15}$, $r_{17}$, $r_{18}$, $r_{19}$ and $r_{20}$ are incomparable with at least another route.

Given an  upper dense sequence  $S=(\alpha_i)_{i\in\con{Z}^+}$ in $]0,1]$ such that $\alpha_1=0.8$ we have the following ranking with respect the orders:

\ITEM For $\unlhd_{Lex1}^S$:
\begin{multline*}
r_{17}\lhd_{Lex1}^Sr_2\lhd_{Lex1}^S r_{14}\lhd_{Lex1}^S r_8
\lhd_{Lex1}^Sr_{11}
\unlhd_{Lex1}^Sr_6\\
\lhd_{Lex1}^Sr_{16} \lhd_{Lex1}^S r_4 \lhd_{Lex1}^S r_{15}\lhd_{Lex1}^Sr_{20}\unlhd_{Lex1}^S r_{12}\\
\lhd_{Lex1}^S r_{18}
\lhd_{Lex1}^Sr_{5}
\lhd_{Lex1}^S r_3 \lhd_{Lex1}^Sr_{19}
\lhd_{Lex1}^S r_{13}\\
\lhd_{Lex1}^S r_1\lhd_{Lex1}^Sr_{10} \lhd_{Lex1}^S r_9\lhd_{Lex1}^S r_7.
\end{multline*}
\ITEM For $\unlhd_{Lex2}^S$:
\begin{multline*}
r_{17}\unlhd_{Lex2}^Sr_2\lhd_{Lex2}^S r_{14}\lhd_{Lex2}^S r_8\lhd_{Lex2}^Sr_{11}
\lhd_{Lex2}^Sr_6\\
\lhd_{Lex2}^Sr_{16}\lhd_{Lex2}^Sr_4
\lhd_{Lex2}^S r_{15}
\lhd_{Lex2}^Sr_{20}\lhd_{Lex2}^S r_{12}\\
\lhd_{Lex2}^S r_{18} \lhd_{Lex2}^Sr_{5}\lhd_{Lex2}^S r_3\lhd_{Lex2}^Sr_{19} \lhd_{Lex2}^S r_{1}\\
\lhd_{Lex2}^S r_{13} \lhd_{Lex2}^Sr_{10}
\lhd_{Lex2}^S r_9\lhd_{Lex2}^S r_7.
\end{multline*}
\ITEM For $\unlhd_{XY}^S$:
\begin{multline*}
r_{17} \lhd_{XY}^S r_2 \unlhd_{XY}^S  r_{14} \unlhd_{XY}^S r_8\unlhd_{XY}^S r_{11} \unlhd_{XY}^Sr_6\\
 \unlhd_{XY}^S r_{16} \unlhd_{XY}^S r_4  \unlhd_{XY}^S  r_{20}
\unlhd_{XY}^S r_{15} \unlhd_{XY}^S r_{12}\\
\unlhd_{XY}^Sr_{18} \unlhd_{XY}^Sr_5  \unlhd_{XY}^S r_3 \unlhd_{XY}^S r_{19}\unlhd_{XY}^S  r_{13}\\
 \unlhd_{XY}^S r_1\unlhd_{XY}^S r_{10}
 \unlhd_{XY}^S r_9 \unlhd_{XY}^S r_7.
\end{multline*}
\ITEM For $\unlhd_{2XY}^S$:
\begin{multline*}
r_{17} \lhd_{2XY}^S r_2 \lhd_{2XY}^S  r_{14} \lhd_{2XY}^S r_8
 \lhd_{2XY}^Sr_{11}\lhd_{2XY}^S r_{6}\\
\lhd_{2XY}^S r_{16} \lhd_{2XY}^S r_4 
\lhd_{2XY}^S r_{15}
\lhd_{2XY}^S r_{20} \lhd_{2XY}^S r_{12}\\
\lhd_{2XY}^S r_{18} \lhd_{2XY}^Sr_8 \lhd_{2XY}^S r_3
\lhd_{2XY}^S r_{19}
\lhd_{2XY}^S r_{13}\\
 \lhd_{2XY}^S r_1 \lhd_{2XY}^S r_{10}
 \lhd_{2XY}^S r_9  \lhd_{2XY}^S r_7.
\end{multline*}
Observe that rankings based on $\unlhd_{Lex1}^S$, $\unlhd_{XY}^S$ and $\unlhd_{2XY}^S$ are the same and differ from the given by $\unlhd_{Lex2}^S$ in the position of $r_1$ and $r_{13}$. So, in the case of Table \ref{tab-cidades}, the minimum route to travel the capitals of the Brazilian Northeast of the Travelling salesman problem is $r_{17}$.

\section{Final Remarks}

In this paper, we  generalize the notion of admissible order on the set of closed subintervals of $[0,1]$ to the set of fuzzy numbers equipped with an arbitrary order. Although the Klir and Yuan order is not consensually accepted as the natural order for the set of fuzzy numbers, most of the orders proposed for fuzzy numbers refine this order. So we deal {with} the Klir-Yuan order as the ``natural'' one for $\NFR$ and explore the admissible order with respect to this order.

Applications of admissible orders on several domains {have} been succesfully developed in several areas as can be seen in \cite{Annax20,Laura17,Bentkowska15,BustinceGBKM13,Laura16b} and the same happens with the application of fuzzy numbers. {Thus}, it may be expected that in a future efforts can  be made to develop interesting applications of admissible orders on $\NFR$.

In \cite{bustince2013generation} a construction method of admissible orders over the set of closed subintervals of $[0,1]$ based on  aggregation functions is provided  and lately generalized in \cite{Santana2020}.  As a future work, we will intend to introduce a generation method of admissible orders on  $\NFR$.

Namely, in \cite{Cornelis2004} is one of the many proposals in the literature regarding weighted fuzzy graphs. Our use of admissible orders is what guarantees that the final cost of all possible paths can be ordered linearly, thus always obtaining the shortest way.

\section*{Acknowledgments}  This work was  supported  by the Brazilian funding agency  \textbf{CNPq (Brazilian Research Council)} under Projects: 311429/2020-3 and by the project PID2019-108392GB-I00 (AEI/10.13039/ 501100011033) of the Spanish Government.

\ifCLASSOPTIONcaptionsoff
  \newpage
\fi

\begin{IEEEbiography}[{\includegraphics[width=1in,height=1.25in,clip,keepaspectratio]{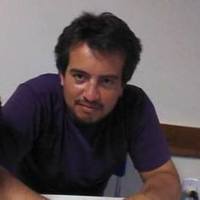}}]{Nicol\'as Zumelzu}
received the M.Sc. degree  from the Federal University of Pernambuco, Brazil, in mathematics (2017). He is currently an Assistant Professor with the University of Magallanes, Punta Arenas, Chile. His research interests include Partial differential equations, Integral equations and Fuzzy mathematics.
\end{IEEEbiography}

\begin{IEEEbiography}[{\includegraphics[width=1in,height=1.25in,clip,keepaspectratio]{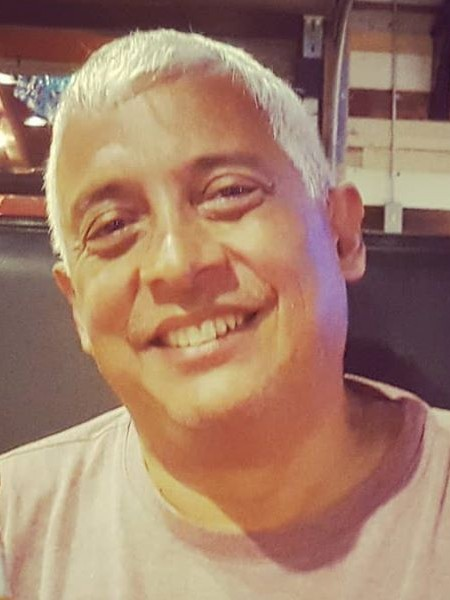}}]{Benjam\'in Bedregal} (M'16) received the Ph.D. degree in computer sciences from the Federal University of Pernambuco, Recife, Brazil, in 1987 and 1996, respectively.
In 1996, he became an Assistant Professor with the Department of Informatics and Applied Mathematics, Federal University of Rio Grande do Norte, Natal, Brazil, where he is currently a Full Professor. His research interests include nonstandard fuzzy sets theory, aggregation and
preaggregation functions, clustering, fuzzy mathematics, and fuzzy automata.
\end{IEEEbiography}
\begin{IEEEbiography}[{\includegraphics[width=1in,height=1.25in,clip,keepaspectratio]{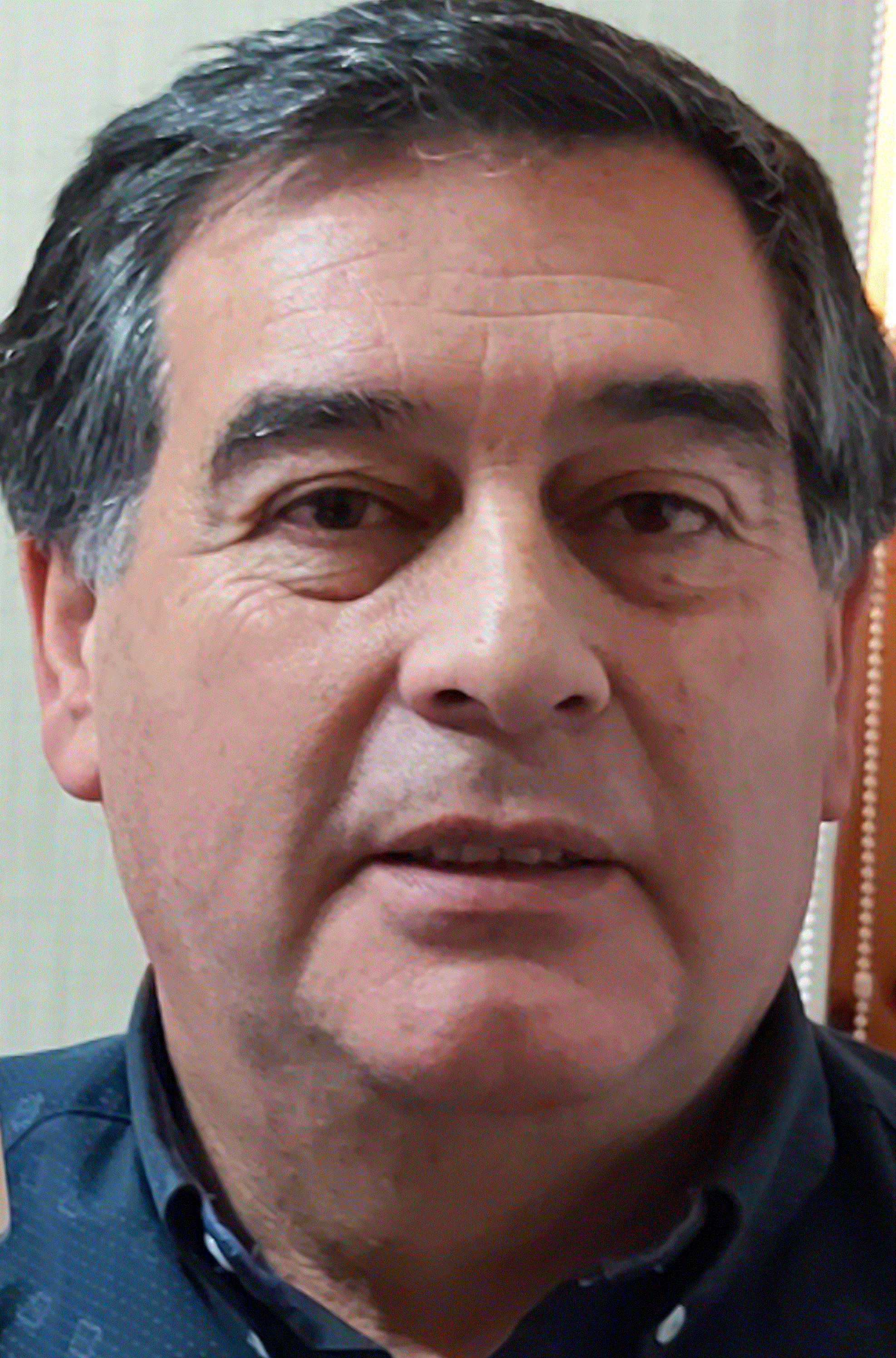}}]{Edmundo Mansilla} (M'18)
 received the Ph.D. degree in mathematic from the Federal University of Pernambuco, Recife Brazil (1999). He is a Associated Professor with the of University of Magallanes, Punta Arenas, Chile. His research interests include Celestial mechanic, Dinamical System and fuzzy mathematics.
\end{IEEEbiography}
\begin{IEEEbiography}[{\includegraphics[width=1in,height=1.25in,clip,keepaspectratio]{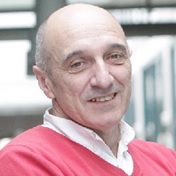}}]
{Humberto Bustince} (M’08--SM'15) Humberto Bustince received the
Ph.D. degree in mathematics from the Public University
of Navarra, Pamplona, Spain, in 1994.
He is currently a Full Professor with the Department
of Automatics and Computation, Public University
of Navarra. He is the author of more than 200 papers
published original articles in ISI journals and is involved in teaching
artificial intelligence for students of computer
sciences. His research interests include fuzzy logic
theory, extensions of fuzzy sets (type-2 fuzzy sets,
interval-valued fuzzy sets, Atanassovs intuitionistic
fuzzy sets), fuzzy measures, aggregation functions, and fuzzy techniques for
image processing.
\end{IEEEbiography}

\begin{IEEEbiography}[{\includegraphics[width=1in,height=1.25in,clip,keepaspectratio]{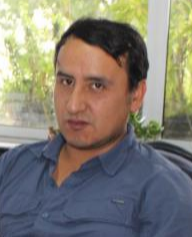}}]{Roberto D\'iaz} (M'18)
Ph.D. degree in Applied Mathematics from the B\'io-B\'io University, Concepci\'on, Chile (2018). He is a Assistant Professor with the Departamento de Ciencias Exactas, Universidad de los Lagos, since 2016. His research interests Partial differential equations, Integral equations, General relativity and Fuzzy mathematics.
\end{IEEEbiography}
\end{document}